\newcommand{\constvec}[1]{\vec{\mathrm #1}}
\renewcommand{\u}{\vec{u}}
\newcommand{\w}{\vec{w}}
\newcommand{\e}{\constvec{e}}
\newcommand{\z}{\constvec{z}}
\renewcommand{\v}{\vec{v}}
\newcommand{\n}{\vec{n}}
\newcommand{\colvec}[3]{\begin{pmatrix}#1\\#2\\#3\end{pmatrix}}
\newcommand{\weak}{\rightharpoonup}
\newcommand{\helical}{{\mathfrak{h}}}
\renewcommand{\leq}{\leqslant}
\renewcommand{\geq}{\geqslant}
\newcommand{\lex}{\ell_\textrm{ex}}
\newcommand{\transposed}{\intercal}
\let\oldtimes\times
\renewcommand{\times}{\!\oldtimes\!}
\newcommand{\subref}[2]{\textup{\tagform@{\hyperref[#1]{\ref*{#1}#2}}}}
\title{Energy minimization for skyrmions\\ on planar thin films}
\author{Giovanni Di Fratta}
\address{Dipartimento di Matematica e Applicazioni \enquote{R. Caccioppoli}, Università degli Studi di Napoli \enquote{Federico II}, Via Cintia, Complesso Monte S. Angelo, 80126 Napoli, Italy}
\email{Giovanni.DiFratta@unina.it}
\author{Michael Innerberger}
\address{Janelia Research Campus, Howard Hughes Medical Institute, Ashburn, VA, USA}
\email{innerbergerm@hhmi.org \qquad\rm (corresponding author)}
\author{Dirk Praetorius}
\address{TU Wien, Institute for Analysis and Scientific Computing, Wiedner Hauptstr.~8--10/E101/4, 1040 Wien, Austria}
\email{Dirk.Praetorius@asc.tuwien.ac.at}
\author{Valeriy Slastikov}
\address{University of Bristol, School of Mathematics, Bristol BS8 1TW, United Kingdom}
\email{Valeriy.Slastikov@bristol.ac.uk }
\thanks{G.~Di~F., M.~I., and D.~P.\ acknowledge support through the Austrian Science Fund (FWF) through the doctoral school \emph{Dissipation and dispersion in nonlinear PDEs} (grant W1245) and the special research program \emph{Taming complexity in partial differential systems} (grant SFB F65).
M.~I.\ was supported by HHMI Janelia.
G.~Di~F. and V.~S. would also like to thank the Max Planck Institute for Mathematics in the Sciences in Leipzig, the Erwin Schrödinger International Institute for Mathematics and Physics in Wien, and the Vienna Center for PDEs at TU Wien for support and hospitality.
\noindent \textsc{G.~Di F.} is a member of the \emph{Gruppo Nazionale per l’Analisi Matematica, la Probabilità e le loro Applicazioni} (GNAMPA), which is part of the \emph{Istituto Nazionale di Alta Matematica} (INdAM). The work of \textsc{G.~Di F.} was partially supported by the Italian Ministry of Education and Research through the PRIN2022 project \emph{Variational Analysis of Complex Systems in Material Science, Physics and Biology} (No.~2022HKBF5C).  
V.~S. acknowledges support by Leverhulme grant RPG-2018-438.}
\begin{document}

\maketitle

\begin{abstract}
	We consider an energy functional that arises in micromagnetic and liquid crystal theory on thin films.
	In particular, our energy comprises a non-convex term that models anti-symmetric exchange as well as an anisotropy term.
	We devise an algorithm for energy minimization in the continuous case and show weak convergence of a subsequence towards a solution of the corresponding Euler--Lagrange equation.
	Furthermore, an algorithm for numerical energy minimization is presented.
	We show empirically that this numerical algorithm converges to the correct solutions for a benchmark problem without the need for user-supplied parameters, and present a rigorous convergence analysis for important special cases.
\end{abstract}



\section{Introduction}\label{sec:intro}

\subsection{Model problem}

We consider a two-dimensional bounded domain $\Omega \subset \R^2$ with Lipschitz boundary $\partial \Omega$. 
Given $\mu, \kappa \in \R$, we define the energy
\begin{equation}
\label{eq:model-energy}
	\EE(\u)
	:=
	\int_\Omega
	\frac{1}{2} \, |\nabla \u|^2
	+ \kappa \, \u \cdot \curl \u
	+ \frac{\mu}{2} \, |\u \cdot \e_3|^2
	\d{x}
    \quad \text{for all }  u \in \vec{H}^1(\Omega) := [H^1(\Omega)]^3,
\end{equation}
where $\e_i \in \R^3$ is the $i$-th canonical basis vector and the 2D (vector) $\curl$-operator is defined by
\begin{equation}
\label{eq:curl}
	\curl \u
	:=
	\sum_{i=1}^2 \e_i \times \partial_i \u.
\end{equation}
We aim to solve the following minimization problem:
\begin{equation}\label{eq:minimization:E}
    \text{Find}\quad \u \in \MM \coloneqq \set[\big]{\v \in \vec{H}^1(\Omega)}{|\u| = 1 \text{ a.e.\ in } \Omega}
    \quad \text{such that} \quad
    \EE(\u) = \min_{\v \in \MM} \EE(\v).
\end{equation}
The task of solving such minimization problems arises in the theories of micromagnetics and nematic liquid crystals on thin (planar) films~\cite{ddfpr2022}.

The seminal work~\cite{alouges97} introduced a constructive algorithm for computing minimizers to the three-dimensional Dirichlet energy with Dirichlet boundary conditions, which is a simplified variant of~\eqref{eq:model-energy}.
The work~\cite{bartels05} provided and analyzed a finite element version of this algorithm, which indeed involves some subtle technicalities to carry over the argument from the continuous case.
In particular, the algorithm poses a certain angle condition to the underlying triangulation; see Definition~\ref{def:angle-condition} below.

\subsection{Contributions of the present work}

First, we present a surrogate energy functional which has the same set of minimizers as~\eqref{eq:minimization-problem} while consisting only of positive contributions, regardless of the involved parameters $\kappa$ and $\mu$.
This makes the surrogate energy much more convenient for analysis and numerical algorithms.
In particular, the surrogate energy is subsequently used to formulate an iterative algorithm in the spirit of~\cite{alouges97} for computing minimizers of~\eqref{eq:minimization-problem}.
Our algorithm provides a constructive way of finding a minimizing sequence (up to a subsequence) with monotonically decreasing energy starting from an arbitrary initial function.
In analogy to~\cite{alouges97}, we show convergence of this algorithm to solutions of the corresponding Euler--Lagrange equations.
We stress that our problem deals with a model that is considerably more general than that of~\cite{alouges97}, as it incorporates terms of lower order than the Dirichlet energy; in particular, the antisymmetric exchange term $\u \cdot \curl \u$.
This poses several challenges, which we overcome by means of the surrogate energy.
We note that our approach can cover an even larger class of models than~\eqref{eq:model-energy} by permitting different kinds of anisotropy terms; see Section~\ref{subsec:relationship}.
Furthermore, we present a numerical algorithm in the spirit of~\cite{bartels05}, which provides a convenient way of computing approximations to minimizers of $\EE$.
We give implementation details overcoming difficulties that arise from considering natural boundary conditions and that substantially enhance the algorithm given in~\cite{bartels05}.

\subsection{Outline}

In Section~\ref{sec:energy}, we introduce a surrogate energy for~\eqref{eq:model-energy} that is easier to work with on the analytical as well as on the numerical level.
We proceed by stating our energy minimization algorithm in Section~\ref{sec:results}, as well as a convergence result.
Finally, we present a practical finite element discretization in Section~\ref{sec:details} together with some implementation details, numerical experiments, and a convergence analysis for important special cases.

\subsection{Notation}

Throughout, we use boldface letters to denote vectors and italic boldface letters to denote vector-valued functions, whereas for components of vectors and functions we do not (e.g., $u_3$ is the third component of $\constvec{u} \in \R^3$ or some function $\u \colon \Omega \to \R^3$, respectively).
This is to visually distinguish components and collections of vectors.
Moreover, to ensure uniformity of notation even in the discrete setting of Theorem~\ref{th:convergence-discrete}, sequences of vectors generated by algorithms are denoted by upper indices, e.g., $(\constvec{u}^n)_{n \in \N}$.
Finally, the cross product $\constvec{A} \times \constvec{u}$ of a matrix $\constvec{A} \in \R^{3 \times 3}$ and a vector $\constvec{u} \in \R^3$ is taken columnwise.


\section{Surrogate energy \texorpdfstring{$\JJ$}{J}}\label{sec:energy}

\subsection{Setup}

To find minimizers of $\EE$ from~\eqref{eq:model-energy}, we seek minimizers of a surrogate energy with nicer properties for both analysis and numerics.
To this end, we introduce the two-dimensional \emph{helical derivatives} (cf.~\cite{melcher14,dfip20})
\begin{equation}
\label{eq:helical-derivatives}
	\partial_i^\helical \v
	:=
	\partial_i \v + \kappa \, \v \times \e_i,
	\qquad
	\nabla_\helical \v
	:=
	(\partial_1^\helical \v, \partial_2^\helical \v),
	\qquad
	\Delta_\helical \v
	:=
	\sum_{i=1}^{2} \partial_i^\helical \partial_i^\helical \v,
\end{equation}
and define, with $\gamma \in \R$, the surrogate energy
\begin{equation}
\label{eq:energy}
	\JJ(\v)
	:=
	\frac{1}{2} \int_\Omega
	\big[ |\nabla_\helical \v|^2
	+ g_\gamma (\v, \v) \big]
	\d{x}
    \quad \text{for all } \v \in \vec{H}^1(\Omega),
\end{equation}
where the bilinear form $g_\gamma \colon \R^3 \times \R^3 \to \R$ reads
\begin{equation}
\label{eq:gamma-def}
	g_\gamma (\constvec{w}, \constvec{v})
	:=
	\begin{cases}
		\gamma \, (\constvec{w} \cdot \e_3)(\constvec{v} \cdot \e_3) & \gamma \geq 0,\\
		|\gamma| \, (\constvec{w} \times \e_3) \cdot (\constvec{v} \times \e_3) & \gamma < 0.
	\end{cases}
\end{equation}
Note that all terms in $\JJ$ are positive (which is required by our analysis below) as opposed to the terms in $\EE$ from~\eqref{eq:model-energy}.
With $\MM$ from~\eqref{eq:minimization:E}, the surrogate minimization problem then reads as follows:
\begin{equation}
\label{eq:minimization-problem}
	\text{Find}\quad \u \in \MM 
    \quad \text{such that}
	\quad
	\JJ(\u) 
	=
	\min_{\v \in \MM} \JJ(\v).
\end{equation}

\subsection{Relationship of \texorpdfstring{$\JJ$}{J} and \texorpdfstring{$\EE$}{E}}\label{subsec:relationship}

It is straightforward to verify, using the direct method of the calculus of variations, that the minimization problems for both $\EE$ and $\JJ$ over $\MM$ admit solutions.
Moreover, as we now demonstrate, the two problems are variationally equivalent: for suitable choices of the parameters, minimizers of $\JJ$ coincide with that of $\EE$.

\begin{proposition}
	For $\gamma = \mu - \kappa^2$, it holds that
	\begin{equation*}
		\set[\big]{\u \in \MM}{\EE(\u) = \inf_{\v \in \MM} \EE(\v)}
		=
		\set[\big]{\u \in \MM}{\JJ(\u) = \inf_{\v \in \MM} \JJ(\v)}.
	\end{equation*}
\end{proposition}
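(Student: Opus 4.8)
The plan is to show that the two energies $\EE$ and $\JJ$ differ, on the constraint manifold $\MM$, only by an additive constant and a null-Lagrangian (boundary) term, so that they have exactly the same minimizers. First I would expand $|\nabla_\helical \v|^2 = \sum_{i=1}^2 |\partial_i \v + \kappa\, \v\times\e_i|^2$ pointwise. This produces three groups of terms: the Dirichlet density $|\nabla\v|^2$; the cross term $2\kappa\sum_i \partial_i\v \cdot (\v\times\e_i)$; and the quadratic term $\kappa^2 \sum_i |\v\times\e_i|^2$. The cross term is, up to sign and the definition~\eqref{eq:curl}, exactly $2\kappa\, \v\cdot\curl\v$ (one checks $\sum_i \partial_i\v\cdot(\v\times\e_i) = -\sum_i \v\cdot(\e_i\times\partial_i\v) = -\v\cdot\curl\v$, so one must track the sign carefully — this is a place where a sign convention could bite). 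For the quadratic term, $\sum_{i=1}^2 |\v\times\e_i|^2 = |\v\times\e_1|^2 + |\v\times\e_2|^2$, and since $\{\e_1,\e_2,\e_3\}$ is orthonormal, $|\v\times\e_1|^2+|\v\times\e_2|^2+|\v\times\e_3|^2 = 2|\v|^2$; hence $\sum_{i=1}^2|\v\times\e_i|^2 = 2|\v|^2 - |\v\times\e_3|^2 = 2|\v|^2 - (|\v|^2 - (\v\cdot\e_3)^2) = |\v|^2 + (\v\cdot\e_3)^2$.

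Next I would use the pointwise constraint $|\v|=1$ a.e.\ on $\MM$: the term $\kappa^2|\v|^2$ integrates to the constant $\kappa^2|\Omega|$, which does not affect the set of minimizers. Collecting terms, on $\MM$ we get
\begin{equation*}
	\frac12\int_\Omega |\nabla_\helical\v|^2\,\d{x}
	= \EE_{\mathrm{D,DMI}}(\v) + \frac{\kappa^2}{2}\int_\Omega (\v\cdot\e_3)^2\,\d{x} + \frac{\kappa^2}{2}|\Omega|,
\end{equation*}
where $\EE_{\mathrm{D,DMI}}$ denotes the Dirichlet-plus-antisymmetric-exchange part of $\EE$. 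Adding the anisotropy term $\tfrac12 g_\gamma(\v,\v)$ and comparing with $\EE$, I need $\tfrac12\int_\Omega g_\gamma(\v,\v)\,\d{x} + \tfrac{\kappa^2}{2}\int_\Omega(\v\cdot\e_3)^2\,\d{x} = \tfrac{\mu}{2}\int_\Omega(\v\cdot\e_3)^2\,\d{x}$ up to a constant. When $\gamma = \mu-\kappa^2 \ge 0$, $g_\gamma(\v,\v) = \gamma(\v\cdot\e_3)^2$ and this is immediate. When $\gamma<0$, $g_\gamma(\v,\v) = |\gamma|\,|\v\times\e_3|^2 = |\gamma|(|\v|^2-(\v\cdot\e_3)^2) = |\gamma|(1 - (\v\cdot\e_3)^2)$ on $\MM$, so $\tfrac12 g_\gamma(\v,\v)$ contributes $-\tfrac{|\gamma|}{2}(\v\cdot\e_3)^2$ plus the constant $\tfrac{|\gamma|}{2}$, and since $-|\gamma| = \gamma = \mu-\kappa^2$, again $\tfrac{\kappa^2}{2}(\v\cdot\e_3)^2 - \tfrac{|\gamma|}{2}(\v\cdot\e_3)^2 = \tfrac{\mu}{2}(\v\cdot\e_3)^2$. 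In both cases $\JJ(\v) = \EE(\v) + C$ for a constant $C$ depending only on $\kappa,\gamma,|\Omega|$, which proves equality of the minimizer sets (note the statement already asserts both infima are attained, so no separate existence argument is needed here, though it follows from the direct method as remarked).

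The main obstacle, such as it is, is bookkeeping rather than a genuine difficulty: getting the sign of the DMI cross term right relative to the paper's $\curl$ convention in~\eqref{eq:curl}, and correctly reducing $\sum_{i=1}^2|\v\times\e_i|^2$ using the constraint. One subtlety worth flagging explicitly: the cross term $\int_\Omega \sum_i \partial_i\v\cdot(\v\times\e_i)\,\d{x}$ need not be a pure boundary term for general $\v$, but that is irrelevant — we only need the pointwise algebraic identity $|\nabla_\helical\v|^2 = |\nabla\v|^2 - 2\kappa\,\v\cdot\curl\v + \kappa^2(1+(\v\cdot\e_3)^2)$ valid a.e.\ on $\MM$, after which integration is trivial. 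I would present the computation as a single pointwise identity followed by integration over $\MM$, then split into the two cases $\gamma\ge0$ and $\gamma<0$ for the anisotropy matching.
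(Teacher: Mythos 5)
Your route is exactly the paper's: expand $|\nabla_\helical \v|^2$, reduce $\sum_{i=1}^2|\v\times\e_i|^2$ via the identity \eqref{eq:cross-dot-identity}, use $|\v|=1$ on $\MM$ to absorb the $|\v|^2$-terms into a constant, and match the anisotropy in the two cases $\gamma\ge 0$ and $\gamma<0$; the quadratic-term reduction and the case analysis are correct. However, the one step you flagged as delicate is the one you got wrong: the cross term has the \emph{plus} sign. By cyclic invariance of the scalar triple product, $\partial_i\v\cdot(\v\times\e_i)=\v\cdot(\e_i\times\partial_i\v)$ (no sign change; only a transposition of two factors flips the sign), so with the convention \eqref{eq:curl} one gets
\begin{equation*}
	|\nabla_\helical\v|^2
	=
	|\nabla\v|^2 + 2\kappa\,\v\cdot\curl\v + \kappa^2\bigl(|\v|^2+|\v\cdot\e_3|^2\bigr),
\end{equation*}
which is \eqref{eq:helical-relationship}, not the identity with $-2\kappa\,\v\cdot\curl\v$ that you state at the end.

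This is not mere bookkeeping: with your sign, the computation shows $\JJ(\v)=\mathrm{const}+\int_\Omega\bigl[\tfrac12|\nabla\v|^2-\kappa\,\v\cdot\curl\v+\tfrac\mu2|\v\cdot\e_3|^2\bigr]\d{x}$ on $\MM$, i.e.\ $\JJ$ would be identified (up to a constant) with the energy obtained from $\EE$ by replacing $\kappa$ with $-\kappa$. Its minimizers are in general the images of minimizers of $\EE$ under the reflection $u_3\mapsto -u_3$ (opposite chirality), not the minimizers of $\EE$ themselves, so the asserted set equality would not follow from the identity as you wrote it. Once the sign is corrected, the rest of your argument goes through verbatim and coincides with the paper's proof, giving $\JJ(\v)=\EE(\v)+\mathrm{const}$ on $\MM$ and hence equality of the two sets of minimizers.
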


\begin{proof}
	We begin by examining the relation between the terms $|\v \cdot \e_3|^2$ and $|\v \times \e_3|^2$.
	For any $\v \in \vec{H}^1(\Omega)$, it holds that
	\begin{equation}
	\label{eq:cross-dot-identity}
		|\v \times \e_j|^2
		=
		\sum_{i = 1, \, i \neq j}^3 |\v \cdot \e_i|^2,
	\end{equation}
	which can be seen by explicit computation.	
	From the definition~\eqref{eq:gamma-def} of $g_\gamma$, we see that
	\begin{align}\label{eq:9+}
    \begin{split}
		\gamma \geq 0:
		\quad
		&g_\gamma(\v,\v)
		=
		\gamma \, |\v \cdot \e_3|^2,\\
		\gamma < 0:
		\quad
		&g_\gamma(\v,\v)
		=
		- \gamma \, |\v \times \e_3|^2
		\eqreff{eq:cross-dot-identity}{=}
		\gamma \, |\v \cdot \e_3|^2 - \gamma \, |\v|^2 .
    \end{split}
	\end{align}
	For all $\gamma \in \R$, this yields that $g_\gamma(\v,\v) = \gamma \, |\v \cdot \e_3|^2 + C \, |\v|^2$ with $C = | \min\{0, \gamma\} | \ge 0$.
	Finally, we follow~\cite{dfip20} and expand the helical term in $\JJ$ to see that 
	\begin{equation}
	\label{eq:helical-relationship}
	\begin{split}
		| \nabla_\helical \v |^2
		&\eqreff*{eq:helical-derivatives}{=}
		| \nabla \v |^2 + 2 \kappa \, \v \cdot \curl \v + \kappa^2 \sum_{i=1}^2 | \v \times \e_i |^2\\
		&\eqreff*{eq:cross-dot-identity}{=}
		| \nabla \v |^2 + 2 \kappa \, \v \cdot \curl \v + \kappa^2 (|\v|^2 + | \v \cdot \e_3 |^2).
	\end{split}
	\end{equation}
	Combining the identities above and recalling that $\mu = \kappa^2 + \gamma$, we arrive at
	\begin{align*}
		\JJ(\v)
		&=
		\frac{1}{2} \int_\Omega
			\big[ |\nabla_\helical \v|^2
			+ g_\gamma (\v, \v) \big]
		\d{x}\\
		&\eqreff*{eq:helical-relationship}{=}
		\int_\Omega
			\Big[ \frac{1}{2} |\nabla \v|^2
			+ \kappa \, \v \cdot \curl \v
			+ \frac{\kappa^2 + \gamma}{2}| \v \cdot \e_3 |^2
			+ \frac{\kappa^2 + C}{2} |\v|^2 \Big]
		\d{x}\\
		&=
		\EE(\v) + \frac{\kappa^2 + C}{2} \norm{\v}_{\vec{L}^2(\Omega)}^2.
	\end{align*}
	For $\v \in \MM$, we have that $|\v| = 1$ a.e.\ in $\Omega$ and hence $\norm{\v}_{\vec{L}^2(\Omega)}^2 = |\Omega|$.
	Thus, $\JJ(\v) = \EE(\v) + \mathrm{const}$ for all $\v \in \MM$ so that the sets of minimizers of $\JJ$ and $\EE$ coincide.
\end{proof}

\begin{remark}
	It is also of practical interest to consider anisotropy that favors out-of-plane configurations instead of in-plane ones, which is modeled by the modified energy functional
	\begin{equation*}
		\widetilde{\EE}(\u)
		:=
		\int_\Omega
			\Big[ \frac{1}{2} \, |\nabla \u|^2
			+ \kappa \, \u \cdot \curl \u
			+ \frac{\mu}{2} \, |\u \times \e_3|^2 \Big]
		\d{x}.
	\end{equation*}
	If $\gamma = -(\mu + \kappa^2)$, in analogy to the above computations, we see that
	\begin{equation*}
		\set[\big]{\u \in \MM}{\widetilde{\EE}(\u) = \inf_{\v \in \MM} \widetilde{\EE}(\v)}
		=
		\set[\big]{\u \in \MM}{\JJ(\u) = \inf_{\v \in \MM} \JJ(\v)}.
	\end{equation*}
	Note that, thanks to~\eqref{eq:cross-dot-identity} and the unit-length constraint, any mixture of out-of-plane and in-plane anisotropies is encompassed by our $g_\gamma$ term for an appropriate choice of $\gamma$. Such combinations arise, for instance, in the thin-film limit of three-dimensional models with a stray-field contribution; see~\cite{DiFratta2019var,DiFratta2020,ddfpr2022}.
\end{remark}

\subsection{Euler--Lagrange equations of \texorpdfstring{$\JJ$}{J}}

For fixed $\v \in \vec{H}^1(\Omega)$, we denote by $\vec{g}_\gamma(\v) \in \vec{L}^2(\Omega)$ the Riesz representative of the linear form induced by $g_\gamma(\v, \cdot)$ from~\eqref{eq:gamma-def}, i.e.,
\begin{equation*}
  \int_\Omega \vec{g}_\gamma(\v) \cdot \w \d{x}
  = 
  \int_\Omega g_\gamma(\v, \w) \d{x}
  \quad \text{with} \quad
	\vec{g}_\gamma(\v)
	:=
	\begin{cases}
		\gamma \, v_3 \e_3 & \gamma \geq 0,\\
		|\gamma| \, (v_1 \e_1 + v_2 \e_2) & \gamma < 0.
	\end{cases}
\end{equation*}

\begin{lemma}\label{lemma:el-equations}
	In strong form, the Euler--Lagrange equations associated with the minimization problem for $\JJ$ in \eqref{eq:minimization-problem}  are
	\begin{equation}
	\label{eq:el-strong}
		-\Delta_\helical \u + \vec{g}_\gamma(\u)
		= \big( |\nabla_\helical \u|^2 + g_\gamma(\u, \u) \big) \u
		\quad
		\text{in } \Omega,
	\end{equation}
     subject to the pointwise constraint $|\u| = 1$ in $\Omega$ and the natural boundary condition
    \begin{equation}
	\label{eq:el-strongbc}
		\partial_{\n} \u
		= \kappa \n \times \u,\quad \text{on } \partial\Omega,
	\end{equation}
    where $\n$ denotes the unit outward normal on $\partial\Omega$.
	
    Moreover, a function $\u \in \MM$ is a weak solution of~\eqref{eq:el-strong} if and only if it satisfies the variational identity
	\begin{equation}
	\label{eq:el-equations}
		\int_\Omega \big[ (\u \times \nabla_\helical \u) : \nabla_\helical \v
		+ g_\gamma(\u, \v \times \u) \big] \d{x}
		=
		0
		\quad
		\text{for all } \v \in \vec{H}^1(\Omega) \cap \vec{L}^\infty(\Omega).
	\end{equation}
\end{lemma}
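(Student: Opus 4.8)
The plan is to isolate a few algebraic facts about the helical derivative and then run the standard constrained-variation argument. The key tool is a Leibniz-type product rule, $\partial_i^\helical(\v\times\u) = (\partial_i^\helical\v)\times\u + \v\times(\partial_i^\helical\u)$, which follows from the ordinary product rule for $\partial_i$ together with the pointwise identity $(\v\times\e_i)\times\u + \v\times(\u\times\e_i) = (\v\times\u)\times\e_i$ (a short computation with the BAC--CAB rule). The two companion facts are: the integration-by-parts formula $\int_\Omega(\partial_i^\helical\phi)\cdot\psi\d{x} = -\int_\Omega\phi\cdot(\partial_i^\helical\psi)\d{x} + \int_{\partial\Omega}(\phi\cdot\psi)\,n_i\d{s}$, i.e.\ $\partial_i^\helical$ is skew-adjoint up to the boundary term; and, for $\u\in\MM$, the pointwise relations $\u\cdot\partial_i^\helical\u = 0$ a.e.\ (because $\u\cdot\partial_i\u = 0$ and $\u\cdot(\u\times\e_i) = 0$) and, differentiating once more, $-\u\cdot\Delta_\helical\u = |\nabla_\helical\u|^2$ a.e.

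For the variational identity~\eqref{eq:el-equations} I would differentiate $\JJ$ along the curve $\u_t := (\u + t\,\v\times\u)/|\u + t\,\v\times\u|$ with $\v\in\vec{H}^1(\Omega)\cap\vec{L}^\infty(\Omega)$. Since $\v\times\u\perp\u$ pointwise, $|\u + t\,\v\times\u|^2 = 1 + t^2|\v\times\u|^2\geq 1$, so $t\mapsto\u_t$ is a $C^1$ curve in $\MM$ with $\u_0 = \u$, $\partial_t\u_t|_{t=0} = \v\times\u$, and $\partial_t(\nabla_\helical\u_t)|_{t=0} = \nabla_\helical(\v\times\u)$. Optimality gives $0 = \frac{d}{dt}\big|_{t=0}\JJ(\u_t) = \int_\Omega\big[\nabla_\helical\u:\nabla_\helical(\v\times\u) + g_\gamma(\u,\v\times\u)\big]\d{x}$; expanding $\nabla_\helical(\v\times\u)$ with the product rule, discarding the term $(\partial_i^\helical\u)\cdot(\v\times\partial_i^\helical\u) = 0$, and cyclically permuting the remaining scalar triple product converts $\nabla_\helical\u:\nabla_\helical(\v\times\u)$ into $(\u\times\nabla_\helical\u):\nabla_\helical\v$, which is exactly~\eqref{eq:el-equations}. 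Reading this backwards, \eqref{eq:el-equations} is equivalent to the statement that $\int_\Omega[\nabla_\helical\u:\nabla_\helical\psi + g_\gamma(\u,\psi)]\d{x} = 0$ for all tangential $\psi\in\vec{H}^1(\Omega)\cap\vec{L}^\infty(\Omega)$; testing with $\psi = \u\,\varphi$ and using $\u\cdot\partial_i^\helical\u = 0$ identifies the Lagrange multiplier for the constraint $|\u| = 1$ as $|\nabla_\helical\u|^2 + g_\gamma(\u,\u)$, and integrating the $\Delta_\helical$-term by parts then yields the interior equation~\eqref{eq:el-strong} and the natural boundary term $\partial_\n\u - \kappa\,\n\times\u$, i.e.~\eqref{eq:el-strongbc}, whenever $\u$ is regular enough to justify it.

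For the equivalence "$\u\in\MM$ is a weak solution of~\eqref{eq:el-strong} $\iff$~\eqref{eq:el-equations}" I would phrase the weak solution property as $\int_\Omega[\nabla_\helical\u:\nabla_\helical\v + \vec{g}_\gamma(\u)\cdot\v]\d{x} = \int_\Omega(|\nabla_\helical\u|^2 + g_\gamma(\u,\u))\,\u\cdot\v\d{x}$ for all $\v\in\vec{H}^1(\Omega)\cap\vec{L}^\infty(\Omega)$ (this form already encodes~\eqref{eq:el-strongbc}, and $\vec{g}_\gamma(\u)\cdot\v = g_\gamma(\u,\v)$ by definition of the Riesz representative). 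For "$\Rightarrow$", substitute $\v\times\u$ for $\v$: the right-hand side vanishes because $\u\cdot(\v\times\u) = 0$, and the product rule turns the left-hand side into~\eqref{eq:el-equations}. For "$\Leftarrow$", note first that on $\MM$ one has $\{\v\times\u : \v\in\vec{H}^1(\Omega)\cap\vec{L}^\infty(\Omega)\} = \{\psi\in\vec{H}^1(\Omega)\cap\vec{L}^\infty(\Omega) : \psi\cdot\u = 0 \text{ a.e.}\}$ (given such $\psi$, take $\v = \u\times\psi$), so~\eqref{eq:el-equations} says that the left-hand side of the weak form vanishes for every tangential test function. For a general $\v$, split $\v = (\v\cdot\u)\u + \psi$ with $\psi := \v - (\v\cdot\u)\u$ tangential; all pieces stay in $\vec{H}^1(\Omega)\cap\vec{L}^\infty(\Omega)$ because $\u\in\MM$ and products of $H^1\cap L^\infty$ functions are again $H^1\cap L^\infty$. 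The tangential part contributes nothing, and for the normal part $\partial_i^\helical((\v\cdot\u)\u) = \partial_i(\v\cdot\u)\,\u + (\v\cdot\u)\,\partial_i^\helical\u$ together with $\u\cdot\partial_i^\helical\u = 0$ gives $\int_\Omega\nabla_\helical\u:\nabla_\helical((\v\cdot\u)\u)\d{x} = \int_\Omega(\v\cdot\u)|\nabla_\helical\u|^2\d{x}$, while $\vec{g}_\gamma(\u)\cdot((\v\cdot\u)\u) = (\v\cdot\u)\,g_\gamma(\u,\u)$, which reassembles the weak form.

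The main obstacle is bookkeeping rather than any single computation: one must check that $t\mapsto\JJ(\u_t)$ is genuinely differentiable at $0$ with the claimed derivative, that every rearrangement stays inside $\vec{H}^1(\Omega)\cap\vec{L}^\infty(\Omega)$ so that all integrals — in particular those pairing a test function with the merely $L^1$ quantity $|\nabla_\helical\u|^2 + g_\gamma(\u,\u)$ — are well defined, and that the tangential/normal splitting together with the helical Leibniz rule reproduces both the interior equation and the natural boundary condition. Once the product rule $\partial_i^\helical(\v\times\u) = (\partial_i^\helical\v)\times\u + \v\times(\partial_i^\helical\u)$ and the identity $-\u\cdot\Delta_\helical\u = |\nabla_\helical\u|^2$ on $\MM$ are in hand, the remaining steps are routine.
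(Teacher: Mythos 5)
Your proposal is correct and follows essentially the same route as the paper, which only sketches the argument (vary the energy to get the strong form, insert $\v \times \u$ as a test function with the standard triple-product identity, and recover the converse from the tangential identity); your helical product rule, the skew-adjointness of $\partial_i^\helical$, and the identity $-\u\cdot\Delta_\helical\u = |\nabla_\helical\u|^2$ on $\MM$ supply exactly the "standard vector identities" the sketch invokes, and your tangential/normal splitting is a concrete implementation of the paper's appeal to the fundamental lemma. The only loose phrase is "testing with $\psi=\u\,\varphi$" inside the family of tangential test functions (that $\psi$ is normal, not tangential), but the rigorous multiplier identification is anyway carried out correctly in your "$\Leftarrow$" step.
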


\begin{proof}[Sketch of proof]
	The argument parallels the harmonic-map case.
	First, consider an appropriate variation of the energy to derive the strong Euler–Lagrange equations \eqref{eq:el-strong}.
	Next, insert the vector field $\v \times \u$ as a test function in the weak formulation and employ standard vector identities to show that any weak solution of the Euler–Lagrange equations satisfies \eqref{eq:el-equations}.
	The converse implication is obtained by the fundamental lemma of the calculus of variations.
\end{proof}


\section{Energy minimization}\label{sec:results}

\subsection{Computation of minimizers}

Finding (numerical) solutions of \eqref{eq:minimization-problem} is challenging for two main reasons.
First, the pointwise constraint $|\u(x)|=1$ a.e.\ in $\Omega$ is non-convex, so standard minimization algorithms designed for convex constraints do not apply.
Second, minimizers may fail to be unique (for instance, when $\kappa=0$ and $\gamma\ge 0$ the energy is invariant under orthogonal transformations that fix the $\mathbf e_3$-axis).
For these reasons, a direct attack on the Euler--Lagrange equations is generally not viable.

To handle the unit-length constraint, we use the nearest-point projection onto the unit sphere $\mathbb{S}^2$, i.e.,
\begin{equation}
\label{eq:projection-continuous}
	\Pi \colon
	\MM^+ \to \MM, \,
	\Pi \v \coloneqq \frac{\v}{|\v|},
    \quad \text{where } \MM^+ \coloneqq \set[\big]{\v \in \vec{H}^1(\Omega)}{|\v| \geq 1 \text{ a.e.\ in } \Omega}.
\end{equation}
For variational and numerical purposes, it is convenient to denote by $\KK[\u]$ the tangent space to a given function $\u \in \MM$:
\begin{equation}
\label{eq:tanget-space}
	\KK[\u]
	:=
	\set{\v \in \vec{H}^1(\Omega)}{\v \cdot \u = 0 \text{ a.e.\ in } \Omega}.
\end{equation}
While $\MM$ is non-convex (though weakly closed) in $\vec H^1(\Omega)$, $\KK[\u]$ is a closed linear subspace of $\vec H^1(\Omega)$ and hence a Hilbert space for each $\u \in \MM$; this makes $\KK[\u]$ a natural space for linearized computations around $\u$.


Finally, we introduce the symmetric bilinear form
\begin{equation}
\label{eq:bilinear-form}
	a(\w, \v)
	:=
	\int_\Omega \bigl[ \nabla_\helical \w : \nabla_\helical \v
	+ g_\gamma(\w, \v) \bigr] \d{x}
	\quad
	\text{for all } \w, \v \in \vec{H}^1(\Omega).
\end{equation}
With these ingredients (the projection $\Pi$, the tangent spaces $\KK[\u]$, and the bilinear form $a(\cdot,\cdot)$) we adapt and generalize the numerical scheme from \cite{alouges97} to compute minimizers of \eqref{eq:energy}.

\begin{algorithm}[Continuous energy minimization]\label{alg:alouges}
	\textbf{Input:} Initial guess $\u^{0} \in \MM$.\\
	\textbf{Loop:} For all $n = 0, 1, 2, \ldots$, do
	\begin{itemize}
		\item[{\rm (i)}] Find $\w^{n} \in \KK[\u^{n}]$ such that
		\begin{equation}
		\label{eq:weakELbilinear}
			a(\w^{n}, \v)
			=
			a(\u^{n}, \v)
			\qquad \text{ for all } \v \in \KK[\u^{n}].
		\end{equation}
		
		\item[{\rm (ii)}] Set $\u^{n+1} := \Pi(\u^{n} - \w^{n})$.
	\end{itemize}
	\textbf{Output:} Sequence of functions $(\u^{n})_{n \in \N} \subset \MM$.
\end{algorithm}

%
Our main result states well-posedness and convergence of Algorithm~\ref{alg:alouges} to a critical point of $\JJ$.
The proof is postponed to Subsection~\ref{sec:convergence} below.

\begin{theorem}\label{th:convergence}
	Algorithm~\ref{alg:alouges} is well-defined, i.e., for any $\u^n \in \MM$, the variational formulation~\eqref{eq:weakELbilinear} admits at least one solution $\w^n \in \KK[\u^n]$ and $\u^n - \w^n \in \MM^+$.
	For any sequence $(\u^{n})_{n \in \N}$ generated by Algorithm~\ref{alg:alouges}, there exists a subsequence (not relabeled) and a function $\u \in \MM$ such that, along this subsequence,
	\begin{equation}
	\label{eq:convergence}
		\u^{n} \weak \u
		\quad
		\text{weakly in } \vec{H}^1(\Omega)
        \text{ as } n \to \infty
	\end{equation}
	and $\u$ is a solution of the Euler--Lagrange equations~\eqref{eq:el-strong} of $\JJ$.
\end{theorem}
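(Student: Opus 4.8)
The plan is to follow the classical Alouges-type argument from \cite{alouges97}, adapting each step to the helical setting via the coercivity of the bilinear form $a(\cdot,\cdot)$ on the tangent space. The first task is well-posedness of step~(i): for fixed $\u^n \in \MM$, the space $\KK[\u^n]$ is a closed subspace of $\vec H^1(\Omega)$, and $a(\cdot,\cdot)$ is bounded and symmetric there; I would establish coercivity on $\KK[\u^n]$ by expanding $|\nabla_\helical \v|^2 = |\nabla \v|^2 + 2\kappa\, \v\cdot\curl\v + \kappa^2(|\v|^2 + |v_3|^2)$ as in \eqref{eq:helical-relationship}, absorbing the indefinite cross term via Young's inequality and controlling the resulting negative $L^2$-contribution either by the positive $g_\gamma$ term or, if that fails, by invoking a Poincaré-type inequality on $\KK[\u^n]$ (the constraint $\v \cdot \u^n = 0$ rules out the constant-in-a-fixed-direction zero modes, so $\|\v\|_{L^2}$ is controlled by $\|\nabla\v\|_{L^2}$ up to a constant depending on $\u^n$). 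Together with Lax--Milgram this gives existence (and uniqueness) of $\w^n$. Then $\u^n - \w^n \in \MM^+$ follows pointwise: since $\w^n \perp \u^n$ a.e., $|\u^n - \w^n|^2 = |\u^n|^2 + |\w^n|^2 = 1 + |\w^n|^2 \geq 1$, so $\Pi$ is well-defined and $\u^{n+1} \in \MM$.

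The second task is the energy decrease. Using the variational identity \eqref{eq:weakELbilinear} with $\v = \w^n$, one gets $a(\w^n,\w^n) = a(\u^n,\w^n)$, whence the orthogonal-decomposition computation
\[
  2\JJ(\u^n - \w^n) = a(\u^n - \w^n, \u^n - \w^n) = a(\u^n,\u^n) - 2a(\u^n,\w^n) + a(\w^n,\w^n) = a(\u^n,\u^n) - a(\w^n,\w^n),
\]
so $\JJ(\u^n - \w^n) = \JJ(\u^n) - \tfrac12 a(\w^n,\w^n) \leq \JJ(\u^n)$. Next I would show $\JJ$ does not increase under the projection $\Pi$; this is the standard lemma that $|\nabla_\helical(\v/|\v|)| \leq |\nabla_\helical\v|$ pointwise a.e.\ whenever $|\v|\geq 1$, combined with monotonicity of $g_\gamma$ under the same scaling (here one checks the two cases $\gamma \geq 0$ and $\gamma < 0$ separately, using $|\Pi\v \cdot \e_3| \leq |\v\cdot\e_3|$ and $|\Pi\v \times \e_3| \leq |\v\times\e_3|$). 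Hence $\JJ(\u^{n+1}) \leq \JJ(\u^n - \w^n) \leq \JJ(\u^n)$, giving a monotone bounded sequence; in particular $\sum_n a(\w^n,\w^n) < \infty$, and by coercivity $\|\w^n\|_{\vec H^1(\Omega)} \to 0$.

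The third task is passing to the limit. Since $(\JJ(\u^n))_n$ is bounded and $a$ is coercive on all of $\vec H^1(\Omega)$ modulo lower-order terms (using the unit-length constraint to bound $\|\u^n\|_{L^2}$ by $|\Omega|^{1/2}$), the sequence $(\u^n)_n$ is bounded in $\vec H^1(\Omega)$; extract a weakly convergent subsequence $\u^n \weak \u$, with $\u \in \MM$ since $\MM$ is weakly closed (Rellich gives a.e.\ convergence of a further subsequence, preserving $|\u|=1$). It remains to verify $\u$ solves \eqref{eq:el-strong}, equivalently \eqref{eq:el-equations}. Fix $\boldsymbol\phi \in \vec H^1(\Omega)\cap\vec L^\infty(\Omega)$ and note that $\v^n := \boldsymbol\phi \times \u^n \in \KK[\u^n]$ is an admissible test function; plugging it into \eqref{eq:weakELbilinear} yields $a(\u^n, \boldsymbol\phi\times\u^n) = a(\w^n, \boldsymbol\phi\times\u^n) \to 0$ because $\|\w^n\|_{\vec H^1}\to 0$ while $\|\boldsymbol\phi\times\u^n\|_{\vec H^1}$ stays bounded. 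Expanding $a(\u^n,\boldsymbol\phi\times\u^n)$ and using the vector identities from the sketch of Lemma~\ref{lemma:el-equations} rewrites it as $\int_\Omega [(\u^n\times\nabla_\helical\u^n):\nabla_\helical\boldsymbol\phi + g_\gamma(\u^n,\boldsymbol\phi\times\u^n)]\,\d x$; the main obstacle is taking the limit inside this expression, since it is quadratic in $\u^n$ and only weak $\vec H^1$-convergence is available. This is resolved exactly as in \cite{alouges97}: combine weak convergence of $\nabla_\helical\u^n$ with strong $\vec L^2$-convergence of $\u^n$ (Rellich) so that products of the form (weak $\times$ strong) converge, handling the term $\u^n \times \nabla_\helical \u^n$ by writing $\u^n\times\nabla\boldsymbol\phi\times\u^n$-type rearrangements to isolate one strongly converging factor; the lower-order $g_\gamma$ term passes to the limit by strong $\vec L^2$-convergence directly. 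One then concludes $\int_\Omega[(\u\times\nabla_\helical\u):\nabla_\helical\boldsymbol\phi + g_\gamma(\u,\boldsymbol\phi\times\u)]\,\d x = 0$ for all admissible $\boldsymbol\phi$, which by Lemma~\ref{lemma:el-equations} is precisely the statement that $\u$ solves the Euler--Lagrange equations \eqref{eq:el-strong}.
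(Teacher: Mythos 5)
There is a genuine gap, and it sits exactly where your argument departs from the paper: the claimed coercivity of $a(\cdot,\cdot)$ on $\KK[\u^n]$ (and the ensuing Lax--Milgram existence \emph{and uniqueness}) is false in general. The form $a$ is only positive semidefinite: $a(\v,\v)=2\JJ(\v)\geq 0$, and its kernel on $\KK[\u^n]$ can be nontrivial. Take $\kappa=0$, $\gamma=0$ and $\u^n\equiv\e_1$; then $\e_2\in\KK[\u^n]$ with $a(\e_2,\e_2)=0$ but $\norm{\e_2}_{\vec H^1(\Omega)}>0$, so neither coercivity nor your fallback Poincar\'e inequality on $\KK[\u^n]$ holds (the tangency constraint does \emph{not} rule out constant directions unless the values of $\u^n$ span $\R^3$), and the Young-inequality absorption cannot be saved by $g_\gamma$, which only controls one or two components of $\v$ with factor $|\gamma|$. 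Your uniqueness conclusion moreover contradicts Proposition~\ref{prop:w-uniqueness}(a) and the paper's explicit remark that, with natural boundary conditions, uniqueness of step~(i) cannot be taken for granted. The paper avoids this entirely: it observes that \eqref{eq:weakELbilinear} is the Euler--Lagrange equation of the convex, continuous, nonnegative functional $\w\mapsto\JJ(\u^n-\w)$ on the reflexive space $\KK[\u^n]$ and obtains existence (only) by the direct method.

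The same false premise resurfaces later: you deduce $\norm{\w^n}_{\vec H^1(\Omega)}\to 0$ from $\sum_n a(\w^n,\w^n)<\infty$ "by coercivity", and then use this to kill $a(\w^n,\boldsymbol\phi\times\u^n)$ in the limit passage. Without coercivity you only get $\JJ(\w^n)\to 0$, which is what the paper proves; but that suffices if, as in the paper, you estimate $|a(\w^n,\boldsymbol\phi\times\u^n)|\leq a(\w^n,\w^n)^{1/2}a(\boldsymbol\phi\times\u^n,\boldsymbol\phi\times\u^n)^{1/2}\lesssim\JJ(\w^n)^{1/2}$ by the Cauchy--Schwarz inequality for the semidefinite form together with the uniform $\vec H^1$-bound on $\u^n$ and Lemma~\ref{lemma:lower-bound}. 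So this second issue is repairable by a local fix, whereas the well-posedness of step~(i) needs the convex-minimization argument rather than Lax--Milgram. The remaining components of your proposal (testing \eqref{eq:weakELbilinear} with $\w^n$ to get the energy split $\JJ(\u^n-\w^n)=\JJ(\u^n)-\JJ(\w^n)$, the pointwise projection estimates for the helical and $g_\gamma$ terms, the telescoping sum, the $\vec H^1$-bound via $|\u^n|=1$, weak closedness of $\MM$, and the test functions $\boldsymbol\phi\times\u^n$ with weak-times-strong convergence, concluding via Lemma~\ref{lemma:el-equations}) coincide with the paper's Steps~2--5.
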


%
While the existence of a solution $\w^n \in \KK[\u^n]$ to~\eqref{eq:weakELbilinear} is ensured by Theorem~\ref{th:convergence}, uniqueness might fail in general. Clearly,
uniqueness of the solution to~\eqref{eq:weakELbilinear} is guaranteed when Dirichlet boundary conditions are imposed on $\partial\Omega$ (or at least on a nontrivial portion of $\partial \Omega$), as considered in~\cite{alouges97,bartels05}.

As our analysis is concerned with natural boundary conditions (cf.~\eqref{eq:el-strongbc}), the uniqueness of step~{\rm(i)} in Algorithm~\ref{alg:alouges} cannot be taken for granted.
In fact, for certain parameter regimes, additional constraints on $\w^n$ are required to secure uniqueness.
The precise circumstances under which non-uniqueness may occur are characterized in point~{\rm(a)} of the proposition below.

\begin{proposition}\label{prop:w-uniqueness}
	Suppose that $\Omega$ is connected.
	Let $\u \in \MM$ and $\w_1, \w_2 \in \KK[\u]$ be two solutions of the Euler--Lagrange equations~\eqref{eq:weakELbilinear}.
	Then, there hold the following assertions:
	\begin{enumerate}
		\item[\rm(a)] For $\kappa = 0$, the difference $\w_1 - \w_2 \in \KK[\u]$ is constant and satisfies
		\begin{itemize}
			\item $\w_1 - \w_2 \perp \e_3$ a.e. in $\Omega$ if $\gamma >0$,
			\item $\w_1 - \w_2 \in \mathrm{span}\{\e_3\}$ a.e. in $\Omega$ if $\gamma <0$.
		\end{itemize}
		
		\item[\rm(b)] For $\kappa \neq 0$, i.e., in the presence of the anti-symmetric exchange term, the difference $\w_1 - \w_2$ vanishes.
		In particular, the solution of~\eqref{eq:weakELbilinear} is unique.
	\end{enumerate}
\end{proposition}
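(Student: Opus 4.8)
The plan is to subtract the two variational identities and test with the difference itself. Set $\w := \w_1 - \w_2 \in \KK[\u]$. Since $\w_1,\w_2$ both solve \eqref{eq:weakELbilinear} with the same right-hand side $a(\u,\cdot)$, linearity gives $a(\w, \v) = 0$ for all $\v \in \KK[\u]$. The subtlety is that $\w$ itself lies in $\KK[\u]$ (the tangent space is linear), so we may take $\v = \w$ to obtain
\begin{equation}
\label{eq:proposal-diff}
	a(\w, \w)
	=
	\int_\Omega \bigl[ |\nabla_\helical \w|^2 + g_\gamma(\w, \w) \bigr] \d{x}
	=
	0.
\end{equation}
Both integrands are nonnegative (this is exactly the point of the surrogate energy, cf.\ the remark after \eqref{eq:gamma-def}), so $|\nabla_\helical \w| = 0$ and $g_\gamma(\w,\w) = 0$ a.e.\ in $\Omega$.

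Next I would unpack $\nabla_\helical \w = 0$. By definition \eqref{eq:helical-derivatives}, this reads $\partial_i \w = -\kappa\, \w \times \e_i$ for $i = 1, 2$, i.e.\ $\nabla \w$ is determined algebraically by $\w$. For $\kappa = 0$ this is simply $\nabla \w = 0$, so $\w$ is constant on the connected domain $\Omega$; then the constraint $g_\gamma(\w,\w) = 0$ forces $\w \cdot \e_3 = 0$ when $\gamma > 0$ and $\w \times \e_3 = 0$ (i.e.\ $\w \in \Span\{\e_3\}$) when $\gamma < 0$, by the explicit form \eqref{eq:gamma-def}. This yields assertion (a). For $\kappa \neq 0$, I would argue that $\w \equiv 0$ is the only possibility: from $\partial_i \w = -\kappa\, \w \times \e_i$ one computes, e.g., $\partial_1 |\w|^2 = 2\w \cdot \partial_1 \w = -2\kappa\, \w \cdot (\w \times \e_1) = 0$ and likewise $\partial_2 |\w|^2 = 0$, so $|\w|$ is constant, say $|\w| \equiv c$. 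If $c \neq 0$, differentiating once more and using the mixed partials (or directly computing $\partial_1 \partial_2 \w = \partial_2 \partial_1 \w$) produces an algebraic obstruction: $\partial_2(\w \times \e_1) = (\w\times\e_1)$-type terms that, together with $\e_1 \times (\w \times \e_2) \neq \e_2 \times (\w \times \e_1)$ in general, force $\kappa = 0$, a contradiction. Alternatively, and more cleanly, one recognizes that the overdetermined first-order system $\partial_i \w = -\kappa \w \times \e_i$ has a solution with $\w \not\equiv 0$ only if the connection it defines is flat, and a short curvature computation shows it is not flat for $\kappa \neq 0$; hence $\w \equiv 0$, giving (b).

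The main obstacle is the $\kappa \neq 0$ case: concluding $\w \equiv 0$ from $\nabla_\helical \w = 0$ alone, without invoking the $g_\gamma$ term, requires exploiting the non-integrability of the helical connection. I expect the cleanest route is the explicit second-derivative (compatibility) computation: from $\partial_i \w = -\kappa\, \w \times \e_i$ we get $\partial_j \partial_i \w = -\kappa\, (\partial_j \w) \times \e_i = \kappa^2 (\w \times \e_j) \times \e_i$, and equating the $i{=}1,j{=}2$ and $i{=}2,j{=}1$ mixed partials yields $(\w \times \e_2) \times \e_1 = (\w \times \e_1) \times \e_2$; expanding both sides via the triple product gives $(\w\cdot\e_1)\e_2 - (\e_1\cdot\e_2)\w = (\w\cdot\e_2)\e_1 - (\e_2\cdot\e_1)\w$, i.e.\ $(\w\cdot\e_1)\e_2 = (\w\cdot\e_2)\e_1$, forcing $w_1 = w_2 = 0$; then from $\partial_1 \w = -\kappa \w \times \e_1 = -\kappa w_3 \e_2$ and $w_1 \equiv 0$ one gets $\partial_1 w_3$ expressed through $w_2 \equiv 0$, etc., and tracking all components shows $w_3$ is constant while simultaneously $\partial_2 w_1 = -\kappa(\w\times\e_2)_1 = \kappa w_3$ must vanish since $w_1 \equiv 0$, hence $w_3 = 0$. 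A little care is needed because $\w \in \vec H^1$ only, so these manipulations should be read in the weak/distributional sense (the identity $\nabla \w = -\kappa\, \w \times (\cdot)$ with $\w \in \vec H^1 \cap \vec L^\infty$ — note $|\w|$ is constant — bootstraps $\w$ to $\vec H^2$ and then to smooth, legitimizing the pointwise computation).
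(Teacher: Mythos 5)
Your argument is correct, and its first half coincides with the paper's proof: form the difference $\w=\w_1-\w_2\in\KK[\u]$, test with $\w$ itself, and use nonnegativity of both integrands to get $\nabla_\helical\w=0$ and $g_\gamma(\w,\w)=0$ a.e.; case (a) is then handled identically. Where you genuinely diverge is case (b), $\kappa\neq 0$. The paper stays entirely at first order: it writes $\partial_i\w+\kappa\,\w\times\e_i=0$ componentwise as six scalar equations and then distinguishes $\gamma>0$, $\gamma<0$ (where the constraint $g_\gamma(\w,\w)=0$ kills one or two components and the remaining equations finish the job) from $\gamma=0$, where a separated-variables argument ($w_1$ depends only on $x_2$, $w_2$ only on $x_1$, etc.) is needed. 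You instead exploit the non-integrability of the helical connection: equating mixed second derivatives of $\partial_i\w=-\kappa\,\w\times\e_i$ gives $(\w\times\e_2)\times\e_1=(\w\times\e_1)\times\e_2$, i.e.\ $w_1\e_2=w_2\e_1$, hence $w_1=w_2=0$, and then the first-order system forces $w_3=0$. This buys uniformity: you never touch $g_\gamma$ in case (b), so all signs of $\gamma$ are treated at once, and the computation is arguably cleaner than the paper's $\gamma=0$ subcase. The price is the regularity issue you correctly flag: $\w\in\vec H^1(\Omega)$ only, so the second derivatives must be justified. Your bootstrap works, and in fact can be stated more economically than you do: since $\partial_i\w=-\kappa\,\w\times\e_i\in\vec H^1(\Omega)$, one has $\w\in\vec H^2(\Omega)$ directly, and weak mixed partials commute, which is all you need (the observation that $|\w|$ is constant, and the further bootstrap to smoothness, are not required). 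Your intermediate sketch suggesting the system \enquote{forces $\kappa=0$} is loose, but it is superseded by the explicit compatibility computation, which is the argument that counts.
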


\begin{proof}
	Since $\KK[\u]$ is a linear space, there holds $\w_1 - \w_2 \in \KK[\u]$.
	By assumption,
	\begin{equation*}
		a(\w_1 - \w_2, \v)
		\eqreff{eq:weakELbilinear}{=}
		0
		\qquad \text{for all }
		\v \in \KK[\u].
	\end{equation*}
	For $\v := \w_1 - \w_2$, we obtain
	\begin{equation}
	\label{eq:Lemmauniquptoconst}
		0 = a(\v, \v)
		=
		\int_\Omega \big[ |\nabla_\helical \v|^2 + g_\gamma(\v, \v) \big] \d{x}.
	\end{equation}
	Non-negativity of $g_\gamma(\v, \v)$ implies $\nabla_\helical \v = 0 = g_\gamma(\v, \v)$ a.e.\ in $\Omega$. In particular, we have
	\begin{equation}
	\label{eq:gioEL}
		\partial_i \v + \kappa \, \v \times \e_i = 0
		\qquad \text{for }
		i = 1,2.
	\end{equation}
	Taking the dot product of~\eqref{eq:gioEL} with $\e_i$ for $i = 1,2,3$, we obtain that
	\begin{equation}
	\label{eq:eq4v1}
	\begin{split}
		&
		\mathllap{\mathrm{(a)}}~~\partial_1 v_1 \phantom{{}- \kappa \, \v \cdot \e_3} = 0,
		\qquad
		\mathllap{\mathrm{(c)}}~~\partial_1 v_2 + \kappa \, \v \cdot \e_3 = 0,
		\qquad
		\mathllap{\mathrm{(e)}}~~\partial_1 v_3 - \kappa \, \v \cdot \e_2 = 0,\\
		&
		\mathllap{\mathrm{(b)}}~~\partial_2 v_1 - \kappa \, \v \cdot \e_3 = 0,
		\qquad
		\mathllap{\mathrm{(d)}}~~\partial_2 v_2 \phantom{{}+ \kappa \, \v \cdot \e_3} = 0,
		\qquad
		\mathllap{\mathrm{(f)}}~~\partial_2 v_3 + \kappa \, \v \cdot \e_1 = 0.
	\end{split}
	\end{equation}
	Next, we now consider the assertions {\rm (a)--(b)} separately and exploit $g_\gamma(\v, \v) = 0$.
	Recall that $g_\gamma(\v,\v) \simeq |\v \cdot \e_3|^2$ for $\gamma > 0$ and $g_\gamma(\v,\v) \simeq |\v \times \e_3|^2$ for $\gamma < 0$.
	
	{\rm\bf (a)}: For $\kappa = 0$, \eqref{eq:eq4v1} simplifies to $\nabla \v = \vec{0}$; hence, $\v$ is constant.
	For $\gamma > 0$, we have that $v_3 = 0$ a.e.\ in $\Omega$ and hence $\v \perp \e_3$.
	For $\gamma < 0$, we have $v_1 = 0 = v_2$ a.e.\ in $\Omega$ and hence $\v \in \mathrm{span}\{\e_3\}$.
	
	{\rm\bf (b)}: For $\kappa \neq 0$, we examine the sign of $\gamma$.
	For $\gamma > 0$, there holds $v_3 = 0$.
	Column~\subref{eq:eq4v1}{e--f} implies that also $v_1 = v_2 = 0$, and hence $\v = 0$.
	For $\gamma <0$, we have $v_1 = 0 = v_2$.
	By~\subref{eq:eq4v1}{a--d}, there follows $v_3 = 0$ and hence $\v = 0$.
	For $\gamma = 0$, the relations $\partial_1 v_1 = \partial_2 v_2 = 0$ prove that $v_1(x_1,x_2) = v_1(x_2)$ and $v_2(x_1,x_2) = v_2(x_1)$, i.e., $v_1$ is independent of $x_1$ while $v_2$ is independent of $x_2$.
	Adding~\subref{eq:eq4v1}{b--c}, we see $\partial_1 v_2(x_1) + \partial_2 v_1(x_2)=0$ for every $(x_1,x_2)\in\Omega$.
	This implies that $\partial_1 v_2(x_1)$ and $\partial_2 v_1(x_2)$ are both constant.
	Hence, \subref{eq:eq4v1}{b} yields that $\kappa \, v_3 = \partial_2 v_1(x_2)$ is constant.
	From $\kappa \neq 0$, we thus obtain that $v_3$ is constant in $\Omega$.
	Column~\subref{eq:eq4v1}{e--f} then yields that both $v_1 = 0 = v_2$.
	From~\subref{eq:eq4v1}{b}, we finally conclude that also $v_3$ is zero and hence $\v = 0$ also in this case.
\end{proof}


\subsection{Proof of Theorem~\ref{th:convergence}}\label{sec:convergence}

First, we ensure that both steps of Algorithm~\ref{alg:alouges} lead to a decrease in the energy $\JJ$.
We start by showing that subtracting the tangential update obtained in step (i) decreases the energy (although this intermediate update lies in $\MM^+$ from~\eqref{eq:projection-continuous} rather than in $\MM$ from~\eqref{eq:minimization:E}).
	
\begin{lemma}\label{lemma:continuous-energy-decrease}
	Let $\u \in \vec{H}^1(\Omega)$ and $\w \in \KK[\u]$ such that $a(\w, \v) = a(\u, \v)$ for all $\v \in \KK[\u]$.
	Then, it holds that
	\begin{equation}
	\label{eq:continuous-energy-decrease}
		\JJ(\u - \w)
		=
		\JJ(\u) - \JJ(\w) \le \JJ(\u).
	\end{equation}
\end{lemma}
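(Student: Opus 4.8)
The plan is to recognize that $\JJ$ is nothing but the quadratic form attached to the symmetric bilinear form $a$: comparing \eqref{eq:energy} with \eqref{eq:bilinear-form} gives $\JJ(\v) = \tfrac12\, a(\v,\v)$ for every $\v \in \vec H^1(\Omega)$. Once this is in place, the identity in \eqref{eq:continuous-energy-decrease} reduces to a one-line Galerkin-orthogonality computation: expand
\[
  \JJ(\u-\w) = \tfrac12\, a(\u-\w,\u-\w) = \JJ(\u) - a(\u,\w) + \JJ(\w)
\]
using bilinearity and symmetry of $a$, and then dispose of the cross term $a(\u,\w)$.

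The main step is precisely the treatment of $a(\u,\w)$. Here I would use that $\w$ itself lies in $\KK[\u]$, so it is an admissible test function in the hypothesis $a(\w,\v) = a(\u,\v)$ valid for all $\v \in \KK[\u]$; choosing $\v = \w$ yields $a(\u,\w) = a(\w,\w) = 2\,\JJ(\w)$. Substituting this back collapses the expansion to $\JJ(\u-\w) = \JJ(\u) - 2\JJ(\w) + \JJ(\w) = \JJ(\u) - \JJ(\w)$, which is the asserted equality.

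For the inequality it then suffices to note $\JJ(\w) \ge 0$. This is immediate from the pointwise non-negativity of both integrands in \eqref{eq:energy}: $|\nabla_\helical \w|^2 \ge 0$ trivially, and $g_\gamma(\w,\w) \ge 0$ by \eqref{eq:gamma-def} (it equals $\gamma\, w_3^2$ when $\gamma \ge 0$ and $|\gamma|\,|\w\times\e_3|^2$ when $\gamma < 0$), as already observed after \eqref{eq:energy}. I do not expect any genuine obstacle: the only structural ingredients are that $\JJ$ is the quadratic form of the symmetric form $a$, that $\w \in \KK[\u]$ makes $\v=\w$ admissible in the variational equation, and that $\JJ \ge 0$ pointwise — all of which are already recorded in the setup. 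The single point worth stating carefully is the admissibility of $\v = \w$ in the hypothesis, which is legitimate precisely because $\KK[\u]$ is a linear subspace of $\vec H^1(\Omega)$.
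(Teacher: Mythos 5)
Your proof is correct and follows essentially the same route as the paper: both exploit that $\JJ$ is the quadratic form of the symmetric bilinear form $a$, test the variational identity with $\w$ itself (the paper uses $2\w$, an inessential difference since $\KK[\u]$ is a linear space), and conclude the inequality from $\JJ(\w)\ge 0$. No gaps.
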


\begin{proof}
	We choose $\v = 2 \w \in \KK[\u]$ to obtain that
	\begin{align*}
		0
		=
		a(\u - \w, 2 \w)
		&=
		a(\u, \u) - a(\u, \u) + 2 a(\u, \w) - 2 a(\w, \w)\\
		&=
		a(\u, \u) - a(\u - \w, \u - \w) - a(\w, \w)
        \\&
        = 2 \JJ(\u) - 2 \JJ(\u-\w) - 2 \JJ(\w).
	\end{align*}
	By non-negativity of $\JJ$, it follows that
	\begin{equation*}
		\JJ(\u - \w)
		=
		\JJ(\u) - \JJ(\w)
        \le \JJ(\u).
	\end{equation*}
	This concludes the proof.
\end{proof}

Also, the projection step (ii) decreases the energy.

\begin{lemma}\label{lemma:continuous-projection-decrease}
	Let $\u \in \MM^+$. 
	Then, it holds that
	\begin{equation}
	\label{eq:continuous-projection-decrease}
		\JJ(\Pi(\u))
		\leq
		\JJ(\u).
	\end{equation}
\end{lemma}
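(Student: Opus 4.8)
The plan is to reduce the claimed inequality $\JJ(\Pi(\u)) \le \JJ(\u)$ to a pointwise estimate on the integrand. Writing $\u = \rho\,\vec{m}$ with $\rho := |\u| \ge 1$ a.e.\ and $\vec{m} := \Pi(\u) = \u/|\u|$ a.e.\ in $\Omega$, the idea is to compare the two integrands $|\nabla_\helical \vec{m}|^2 + g_\gamma(\vec{m},\vec{m})$ and $|\nabla_\helical \u|^2 + g_\gamma(\u,\u)$ pointwise and show the former is bounded by the latter a.e. Because $g_\gamma$ is a (nonnegative) quadratic form and $\rho \ge 1$, the anisotropy part is immediate: $g_\gamma(\vec{m},\vec{m}) = \rho^{-2} g_\gamma(\u,\u) \le g_\gamma(\u,\u)$ a.e. The work is therefore entirely in the helical-gradient term.

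For the gradient term, first I would note that $\vec{m}$ is indeed in $\vec{H}^1(\Omega)$ (this is standard since $\rho \ge 1$ bounds $1/\rho$, so $\nabla\vec{m} = \rho^{-1}\nabla\u - \rho^{-2}(\nabla\rho)\vec{m} \in \vec{L}^2$), so $\Pi(\u) \in \MM$ and $\JJ(\Pi(\u))$ makes sense. Then I would expand $|\nabla_\helical \u|^2$ using the product rule: from $\u = \rho\,\vec{m}$ one gets, componentwise in $i=1,2$, that $\partial_i^\helical \u = \partial_i\u + \kappa\,\u\times\e_i = \rho(\partial_i\vec{m} + \kappa\,\vec{m}\times\e_i) + (\partial_i\rho)\vec{m} = \rho\,\partial_i^\helical\vec{m} + (\partial_i\rho)\vec{m}$. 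Since $|\vec{m}| = 1$ a.e., differentiating gives $\vec{m}\cdot\partial_i\vec{m} = 0$, and also $\vec{m}\cdot(\vec{m}\times\e_i) = 0$, so $\vec{m}\cdot\partial_i^\helical\vec{m} = 0$; hence the two contributions $\rho\,\partial_i^\helical\vec{m}$ and $(\partial_i\rho)\vec{m}$ are orthogonal in $\R^3$. Summing the Pythagorean identity over $i=1,2$ yields $|\nabla_\helical\u|^2 = \rho^2|\nabla_\helical\vec{m}|^2 + |\nabla\rho|^2 \ge |\nabla_\helical\vec{m}|^2$ a.e., using $\rho \ge 1$ and discarding the nonnegative $|\nabla\rho|^2$ term.

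Combining the two pointwise bounds, the integrand for $\Pi(\u)$ is dominated a.e.\ by that for $\u$; integrating over $\Omega$ and dividing by $2$ gives $\JJ(\Pi(\u)) \le \JJ(\u)$, as claimed. The only slightly delicate point — the place I would be most careful — is the chain/product rule justification for $\vec{m}$ and $\rho$ in the Sobolev setting: that $\rho = |\u| \in H^1(\Omega)$ with $\nabla\rho = (\u\cdot\nabla\u)/|\u|$ a.e., that $\vec{m} \in \vec{H}^1(\Omega)$, and that the orthogonality relations $\vec{m}\cdot\partial_i\vec{m} = 0$ hold a.e. These all follow from standard results on compositions with Lipschitz (here, smooth away from the relevant set, and bounded below by $1$) functions, so no regularization is needed, but it is worth stating explicitly that $\rho \ge 1$ is exactly what makes $x\mapsto x^{-1}$ and $x\mapsto x^{-2}$ Lipschitz on the range of $\rho$.
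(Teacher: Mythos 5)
Your proof is correct and takes essentially the same approach as the paper: the anisotropy term is handled by the identical $|\u|^{-2}$ scaling argument, and the helical term by the same pointwise orthogonality of the radial component along $\Pi(\u)$ against the tangential helical part, combined with $|\u|\ge 1$ a.e. The only difference is cosmetic: you apply the product rule to the factorization $\u=\rho\,\Pi(\u)$ with $\rho=|\u|$ and invoke Pythagoras, whereas the paper differentiates $\Pi$ via the chain rule and cancels the cross term; both computations amount to the same identity $|\nabla_\helical\u|^2=\rho^2\,|\nabla_\helical\Pi(\u)|^2+|\nabla\rho|^2$.
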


\begin{proof}
	It follows from the definition~\eqref{eq:projection-continuous} of the projection $\Pi(\u)$ that
	\begin{equation*}
		|\Pi(\u) \cdot \e_i|^2
        = \frac{|\u \cdot \e_i|^2}{|\u|^2}
		\leq
		|\u \cdot \e_i|^2
		\quad \text{for all } i=1,2,3,
	\end{equation*}
    where, here and in the following, all estimates hold a.e.\ in $\Omega$.
	From~\eqref{eq:9+}, we then see that
	\begin{equation}
	\label{eq:projection-inequality-gamma}
		g_\gamma(\Pi(\u), \Pi(\u))
		\leq
		g_\gamma(\u, \u).
	\end{equation}
	Thus, it remains to inspect the helical term in the energy density.
	Note that the derivative of the projection can be computed explicitly to be
	\begin{equation}
	\label{eq:projection-derivative}
		\big( \nabla \Pi \big) (\u)
		=
		\frac{1}{|\u|} \Big(\mathbf{1} - \frac{\u \u^\transposed}{|\u|^2} \Big).
	\end{equation}
	Inserting this into the helical energy density, the chain rule proves that
	\begin{align*}
		|\nabla_\helical \Pi (\u)|^2
		&\eqreff*{eq:helical-derivatives}{=}
		\sum_{i=1}^2 | \big( \nabla \Pi \big) (\u) \partial_i \u + \kappa \, \Pi(\u) \times \e_i|^2\\
		&\eqreff*{eq:projection-derivative}{=}
		\sum_{i=1}^2 \Big| \frac{1}{|\u|} \partial_i \u - \frac{1}{|\u|^3} \u \u^\transposed \partial_i \u +  \frac{\kappa}{|\u|} \u \times \e_i \Big|^2\\
		&=
		\sum_{i=1}^2 \Big| \frac{1}{|\u|} (\partial_i \u + \kappa \, \u \times \e_i) - \frac{1}{|\u|^3} (\u \cdot \partial_i \u) \u \Big|^2\\
		&=
		\sum_{i=1}^2 \frac{1}{|\u|^2} | \partial_i \u + \kappa \, \u \times \e_i |^2
		+ \frac{1}{|\u|^4} | \u \cdot \partial_i \u |^2	- \frac{2}{|\u|^4} (\partial_i \u + \kappa \, \u \times \e_i) \cdot (\u \cdot \partial_i \u) \u.
	\end{align*}
	Note that $\u \times \e_i \perp (\u \cdot \partial_i \u) \u$ and $\partial_i \u \cdot (\u \cdot \partial_i \u) \u = | \u \cdot \partial_i \u |^2$. With $|\u| \geq 1$ a.e.\ in $\Omega$, we thus get
	\begin{align}
	\begin{split}
	\label{eq:projection-inequality-helical}
		|\nabla_\helical \Pi (\u)|^2
		&=
		\sum_{i=1}^2 \frac{1}{|\u|^2} | \partial_i \u + \kappa \, \u \times \e_i |^2
		- \frac{1}{|\u|^4} | \u \cdot \partial_i \u |^2 \\
		&\leq
		\sum_{i=1}^2 | \partial_i \u + \kappa \, \u \times \e_i |^2
		=
		| \nabla_\helical \u |^2.
	\end{split}
	\end{align}
	Integrating~\eqref{eq:projection-inequality-gamma} and~\eqref{eq:projection-inequality-helical} over $\Omega$, we conclude the proof.
\end{proof}

The next lemma provides quantitative two-sided estimates for the surrogate energy $\JJ$. From these estimates, we deduce that $\JJ$ controls the $\vec{H}^1$-seminorm and hence is coercive on $\MM$.
\begin{lemma}
\label{lemma:lower-bound}
	For $\u \in \vec{H}^1(\Omega)$, we have that
	\begin{align}
	\label{eq:lemma-lower-bound}
		\norm{\nabla \u}_{\vec{L}^2(\Omega)}^2
		&\leq
		4 \JJ(\u) + 4 \kappa^2 \, \norm{\u}_{\vec{L}^2(\Omega)}^2,\\
	\label{eq:lemma-upper-bound}
		\JJ(\u)
		&\leq
		\norm{\nabla \u}_{\vec{L}^2(\Omega)}^2 + \Big( \kappa^2 + \frac{|\gamma|}{2} \Big) \, \norm{\u}_{\vec{L}^2(\Omega)}^2.
	\end{align}
\end{lemma}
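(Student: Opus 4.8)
The plan is to derive both bounds as pointwise inequalities between the relevant energy densities and then to integrate over $\Omega$. No compactness or regularity theory is needed here; the only ingredients are the elementary identity~\eqref{eq:cross-dot-identity}, Young's inequality, and the non-negativity of $g_\gamma$ built into~\eqref{eq:gamma-def}.

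For the lower bound~\eqref{eq:lemma-lower-bound}, I would rewrite the helical derivative from~\eqref{eq:helical-derivatives} as $\partial_i \u = \partial_i^\helical \u - \kappa \, \u \times \e_i$, apply the elementary estimate $|\xi - \eta|^2 \le 2|\xi|^2 + 2|\eta|^2$ (for $\xi, \eta \in \R^3$) componentwise, and use $|\u \times \e_i| \le |\u|$ together with the consequence $\sum_{i=1}^2 |\u \times \e_i|^2 = |\u|^2 + |\u \cdot \e_3|^2 \le 2|\u|^2$ of~\eqref{eq:cross-dot-identity}. Summing over $i = 1, 2$ gives the pointwise bound $|\nabla \u|^2 \le 2 \, |\nabla_\helical \u|^2 + 4 \kappa^2 \, |\u|^2$ a.e.\ in $\Omega$. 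Since $g_\gamma(\u, \u) \ge 0$, one has $\int_\Omega |\nabla_\helical \u|^2 \d{x} \le 2 \, \JJ(\u)$, so integrating the pointwise bound over $\Omega$ yields~\eqref{eq:lemma-lower-bound}. Coercivity of $\JJ$ on $\MM$ is then immediate: on $\MM$ the unit-length constraint forces $\norm{\u}_{\vec{L}^2(\Omega)}^2 = |\Omega|$, so~\eqref{eq:lemma-lower-bound} controls $\norm{\nabla \u}_{\vec{L}^2(\Omega)}^2$ in terms of $\JJ(\u)$ up to a fixed additive constant.

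For the upper bound~\eqref{eq:lemma-upper-bound}, I would estimate the two summands of $\JJ$ separately. The anisotropy contribution is controlled by the pointwise inequality $g_\gamma(\u, \u) \le |\gamma| \, |\u|^2$, which follows directly from~\eqref{eq:gamma-def} when $\gamma \ge 0$ and via~\eqref{eq:cross-dot-identity} when $\gamma < 0$; hence $\tfrac{1}{2} \int_\Omega g_\gamma(\u, \u) \d{x} \le \tfrac{|\gamma|}{2} \norm{\u}_{\vec{L}^2(\Omega)}^2$. For the exchange contribution, I would expand the helical seminorm by~\eqref{eq:helical-relationship}, bound the antisymmetric-exchange cross term $\int_\Omega \u \cdot \curl \u \d{x}$ by Young's inequality (with the weight chosen so that the resulting coefficient of $\norm{\nabla \u}_{\vec{L}^2(\Omega)}^2$ stays at $1$), and estimate $|\u \cdot \e_3|^2 \le |\u|^2$. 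Adding the two contributions then gives~\eqref{eq:lemma-upper-bound}.

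The lemma poses no genuine difficulty; the one point requiring a little care is the bookkeeping of constants in the Young-inequality step for the helical term, where one must keep track of the discrepancy between $|\u \cdot \e_3|^2$ and $|\u|^2$ coming from~\eqref{eq:cross-dot-identity} in order not to inflate the constant in front of $\norm{\u}_{\vec{L}^2(\Omega)}^2$.
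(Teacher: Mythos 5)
Your treatment of \eqref{eq:lemma-lower-bound} is correct and is essentially the paper's own argument: the paper absorbs the mixed term via the Young inequality $2ab\le a^2/2+2b^2$, which is the same computation as your $|\xi-\eta|^2\le 2|\xi|^2+2|\eta|^2$ step, and both routes give exactly the constants $4$ and $4\kappa^2$ (and factor $2$ when $\kappa=0$).

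For \eqref{eq:lemma-upper-bound}, however, the step you postpone as \enquote{bookkeeping of constants} is a genuine gap: it cannot be carried out with the printed constant. Expanding with \eqref{eq:helical-relationship} gives $\JJ(\u)=\tfrac12\norm{\nabla\u}_{\vec{L}^2(\Omega)}^2+\kappa\int_\Omega\u\cdot\curl\u\d{x}+\tfrac{\kappa^2}{2}\int_\Omega\bigl(|\u|^2+|\u\cdot\e_3|^2\bigr)\d{x}+\tfrac12\int_\Omega g_\gamma(\u,\u)\d{x}$. If the coefficient of $\norm{\nabla\u}_{\vec{L}^2(\Omega)}^2$ is to stay at $1$, only $\tfrac12\norm{\nabla\u}_{\vec{L}^2(\Omega)}^2$ is available to absorb the cross term; since $|\curl\u|^2\le 2|\nabla\u|^2$, the Young step then necessarily costs at least $\kappa^2\norm{\u}_{\vec{L}^2(\Omega)}^2$, which together with the (up to) $\kappa^2\norm{\u}_{\vec{L}^2(\Omega)}^2$ already produced by $\tfrac{\kappa^2}{2}(|\u|^2+|\u\cdot\e_3|^2)$ after your estimate $|\u\cdot\e_3|^2\le|\u|^2$ yields the constant $2\kappa^2+\tfrac{|\gamma|}{2}$, not $\kappa^2+\tfrac{|\gamma|}{2}$. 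No choice of weights can repair this, because \eqref{eq:lemma-upper-bound} itself is false for $\kappa\neq0$: with $\gamma=0$, $\Omega$ the unit disk, and $\u(x)=(-s x_2,\,s x_1,\,1)$, a direct computation gives
\begin{equation*}
	\JJ(\u)-\norm{\nabla\u}_{\vec{L}^2(\Omega)}^2-\kappa^2\,\norm{\u}_{\vec{L}^2(\Omega)}^2
	=
	\pi\Bigl(2\kappa s-s^2-\frac{\kappa^2 s^2}{4}\Bigr)
	>0
\end{equation*}
for $s$ small with $\kappa s>0$. You should know that the paper's own proof contains the same slip: its intermediate claim $\norm{\nabla_\helical\u}_{\vec{L}^2(\Omega)}^2\le 2\bigl(\norm{\nabla\u}_{\vec{L}^2(\Omega)}^2+\kappa^2\norm{\u}_{\vec{L}^2(\Omega)}^2\bigr)$ overlooks precisely the $|\u\cdot\e_3|^2$ contribution in $\sum_{i=1,2}|\u\times\e_i|^2=|\u|^2+|\u\cdot\e_3|^2$ (the discrepancy you flag) and is violated by the same example. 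The honest outcome of either argument is $\JJ(\u)\le\norm{\nabla\u}_{\vec{L}^2(\Omega)}^2+\bigl(2\kappa^2+\tfrac{|\gamma|}{2}\bigr)\norm{\u}_{\vec{L}^2(\Omega)}^2$ (for $\kappa=0$ your bound is fine), which is all that is needed later, since Step~5 of the proof of Theorem~\ref{th:convergence} only uses $\JJ(\cdot)\lesssim\norm{\cdot}_{\vec{H}^1(\Omega)}^2$; so prove that corrected estimate rather than asserting that careful bookkeeping recovers \eqref{eq:lemma-upper-bound} as stated.
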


\begin{proof}
	We first show the estimate~\eqref{eq:lemma-lower-bound}.
	For $\kappa = 0$, the estimate~\eqref{eq:lemma-lower-bound} is obvious by definition of $\JJ$ (and holds even with a factor $2$ instead of $4$).
	Let $\kappa \neq 0$.
	The Young inequality $2ab \leq a^2 / 2 + 2 b^2$ shows that
	\begin{align*}
		|\nabla_\helical \u|^2
		&\eqreff*{eq:helical-derivatives}\geq
		\sum_{i=1}^2 \Big( |\partial_i \u|^2 + \kappa^2 |\u \times \e_i|^2
		- 2 |\kappa \, \partial_i \u \cdot (\u \times \e_i)| \Big)\\
		&\geq
		\big(1 - 1/2 \big) |\nabla \u|^2
		+ \big( \kappa^2 - 2 \kappa^2 \big) \sum_{i=1}^2 |\u \times \e_i|^2.
	\end{align*}
	Hence, we get that
	\begin{equation*}
		\frac{1}{2} |\nabla \u|^2
		\leq
		|\nabla_\helical \u|^2 + \kappa^2 \sum_{i=1}^2 |\u \times \e_i|^2
		\leq
		|\nabla_\helical \u|^2 + 2 \kappa^2 |\u|^2.
	\end{equation*}
	Adding the term $g_\gamma(\u,\u) \geq 0$ on the right-hand side and integrating over $\Omega$, this shows~\eqref{eq:lemma-lower-bound}.
	The estimate~\eqref{eq:lemma-upper-bound} follows from $g_\gamma(\u,\u) \leq |\gamma| \, \norm{\u}_{\vec{L}^2(\Omega)}^2$ as well as
	\begin{equation*}
		\norm{\nabla_\helical \u}_{\vec{L}^2(\Omega)}^2
		\leq
		2 \big( \norm{\nabla \u}_{\vec{L}^2(\Omega)}^2 + \kappa^2 \, \norm{\u}_{\vec{L}^2(\Omega)}^2 \big).
	\end{equation*}
	Combining these estimates, we conclude the proof.
\end{proof}

Finally, we can prove the convergence of Algorithm~\ref{alg:alouges}.

\begin{proof}[\bfseries Proof of Theorem~\ref{th:convergence}]
The proof is split into five steps.\smallskip

	\textbf{Step 1 (Algorithm~\ref{alg:alouges} is well-defined):} We note that  $\JJ(\u) \geq 0$ is convex, continuous, and coercive.
	By convexity of $\JJ$, \eqref{eq:weakELbilinear} is equivalent to finding minimizers of $\JJ(\u^{n} - (\cdot))$ over $\KK[\u^n]$ by means of the Euler--Lagrange equations of the corresponding energy.
	Since $\JJ$ is convex and continuous, it is also sequentially lower semicontinuous, and $\KK[\u^n]$ is a reflexive Banach space.
	Therefore, the existence of at least one solution to~\eqref{eq:weakELbilinear} follows by the direct method of calculus of variations, which means that step~(i) in Algorithm~\ref{alg:alouges} is well-defined.
	For all $n \in \N$ there holds pointwise orthogonality $\u^{n} \perp \w^{n}$ and thus
	\begin{equation*}
		|\u^{n} - \w^{n}|^2
		=
		|\u^{n}|^2 + |\w^{n}|^2
		\geq 1
		\quad \text{a.e.\ in } \Omega.
	\end{equation*}
	Therefore, $\u^n - \w^n \in \MM^+$ and step~{\rm (ii)} in Algorithm~\ref{alg:alouges} is always well-defined.\smallskip

	\textbf{Step 2 (Energy decrease along computed sequence):} 
    It is an immediate consequence of Lemma~\ref{lemma:continuous-energy-decrease} and Lemma~\ref{lemma:continuous-projection-decrease} that
	\begin{equation}
	\label{eq:energy-decrease}
		\JJ(\u^{n+1})
		=
		\JJ(\Pi(\u^{n} - \w^{n}))
		\eqreff{eq:continuous-energy-decrease}{\leq}
		\JJ(\u^{n} - \w^{n})
		\eqreff{eq:continuous-projection-decrease}{\leq}
		\JJ(\u^{n})
		\quad
		\text{for all } n \in \N_0.
	\end{equation}
		
	\textbf{Step 3 (Energy of tangential updates vanishes in the limit):} 
	Together with $\u^{n+1} = \Pi(\u^{n} - \w^{n})$, Lemma~\ref{lemma:continuous-energy-decrease} and Lemma~\ref{lemma:continuous-projection-decrease} yield
	\begin{equation*}
		0
		\leq
		\JJ(\w^{n})
		\eqreff{eq:continuous-energy-decrease}{=}
		\JJ(\u^{n}) - \JJ(\u^{n} - \w^{n})
		\eqreff{eq:continuous-projection-decrease}{\leq}
		\JJ(\u^{n}) - \JJ(\u^{n+1}).
	\end{equation*}
	Summing the last inequality over $n = 0, \ldots, N$, we get that
	\begin{equation*}
		0
		\leq
		\sum_{n=0}^N \JJ(\w^{n})
		\leq
		\JJ(\u^{0}) - \JJ(\u^{N+1})
		\leq
		\JJ(\u^{0})
		\qquad \text{for all } N \in \N.
	\end{equation*}
	In particular, this yields that
    \begin{equation}
	\label{eq:w-convergence}
		\JJ(\w^{n}) \to 0
		\quad \text{as } n \to \infty.
	\end{equation}
	
	\textbf{Step 4 (Existence of limit $\u$ of weakly convergent subsequence):} We infer from Lemma~\ref{lemma:lower-bound}, \eqref{eq:energy-decrease}, and the fact that $|\u^{n}| = 1$ a.e.\ in $\Omega$ that  
	\begin{equation}
	\label{eq:norm-bound}
		\norm{\u^{n}}_{\vec{H}^1(\Omega)}^2
		\eqreff{eq:lemma-lower-bound}{\lesssim}
		\JJ(\u^{n}) + \norm{\u^{n}}_{\vec{L}^2(\Omega)}
		\eqreff{eq:energy-decrease}{\leq}
		\JJ(\u^{0}) + |\Omega|.
	\end{equation}
	In particular, $(\u^{n})_{n \in \N}$ is uniformly bounded in $\vec{H}^1(\Omega)$. Hence there exists a subsequence of $(\u^{n})_{n \in \N}$ (which is not relabeled) and a limit function $\u \in \vec{H}^1(\Omega)$ such that $\u^{n} \weak \u$ in $\vec{H}^1(\Omega)$.
	Since $\MM$ is weakly closed in $\vec{H}^1(\Omega)$, we conclude that $\u \in \MM$. \smallskip
	
	\textbf{Step 5 ($\u$ solves Euler--Lagrange equations~(\ref{eq:el-equations}) of $\boldsymbol\JJ$):} Let $\v \in \vec{H}^1(\Omega) \cap \vec{L}^\infty(\Omega)$.
	Note that $\v \times \u^{n} \in \KK[\u^{n}]$.
	The properties of the triple product imply that
	\begin{equation*}
		\nabla_\helical \u^{n} : \nabla_\helical (\v \times \u^{n})
		=
		\u \times \nabla_\helical \u^{n} : \nabla_\helical \v.
	\end{equation*}
	Using this identity, we derive
	\begin{align}
	\nonumber
		&\int_\Omega \big[ (\u^{n} \times \nabla_\helical \u^{n}) : \nabla_\helical \v
		+ g_\gamma(\u^{n}, \v \times \u^{n}) \big] \d{x}
		=
		a(\u^{n}, \v \times \u^{n})
		=
		a(\w^{n}, \v \times \u^{n}) \qquad \\
	\label{eq:convergence-identity}
		&\qquad \leq
		a(\w^{n}, \w^{n})^{1/2} a(\v \times \u^{n}, \v \times \u^{n})^{1/2} \\
	\nonumber
		&\qquad =
		2 \, \JJ(\w^{n})^{1/2} \JJ(\v \times \u^{n})^{1/2} 
		\\ \nonumber
        &\qquad
        \eqreff{eq:lemma-upper-bound}{\lesssim}
		\JJ(\w^{n})^{1/2} \norm{\v \times \u^{n}}_{\vec{H}^1(\Omega)}
        \lesssim \JJ(w^n) \, \norm{u^n}_{\vec{H}^1(\Omega)}
        \eqreff{eq:norm-bound}\lesssim \JJ(w^n).
	\end{align}
	With Step~3, the right-hand side of~\eqref{eq:convergence-identity} vanishes in the limit.
	For the left-hand side, we can exploit weak convergence in $\vec{H}^1(\Omega)$ and strong convergence in $\vec{L}^2(\Omega)$ of $\u^{n}$.
	Overall, \eqref{eq:convergence-identity} converges to
	\begin{equation*}
		\int_\Omega (\u \times \nabla_\helical \u) : \nabla_\helical \v
		+ g_\gamma(\u, \v \times \u) \d{x}
		=
		0
        \quad \text{for all } \v \in \vec{H}^1(\Omega) \cap \vec{L}^\infty(\Omega).
	\end{equation*}
	Together with Lemma~\ref{lemma:el-equations}, this concludes the proof.
\end{proof}


\section{Finite element discretization}\label{sec:details}

\subsection{Algorithm}

In this section, we extend the approach of \cite{bartels05}.
To this end, let $\TT_h$ be a conforming triangulation of $\Omega$ into compact non-degenerate triangles.
Let $\NN_h$ be the set of nodes of $\TT_h$.
We define the lowest-order finite element space
\begin{equation*}
	\SS^1(\TT_h)
	:=
	\set[]{v \in H^1(\Omega)}{v|_T \text{ is affine } \forall T \in \TT_h}.
\end{equation*}
A convenient basis of $\SS^1(\TT_h)$ is the so-called nodal basis $\set[]{\varphi_{\z}}{\z \in \NN_h}$ consisting of hat functions, i.e., $\TT_h$-piecewise affine functions that satisfy $\varphi_{\z}(\z') = \delta_{\z\z'}$ for all $\z,\z' \in \NN_h$.
A vector-valued finite element function $\v_h \in \vec{\SS}^1(\TT_h) := [\SS^1(\TT_h)]^3$ and its Jacobian can then be written as
\begin{equation*}
	\v_h = \sum_{\z \in \NN_h} \v_h(\z) \varphi_{\z}
	\quad\text{and}\quad
	\nabla \v_h = \sum_{\z \in \NN_h} \v_h(\z) \nabla \varphi_{\z},
\end{equation*}
respectively, where the column vectors $\v_h(\z) \in \R^3$ are the values at the nodes $\z \in \NN_h$; note that $\nabla \v_h$ is pointwise in $\R^{3 \times 2}$ with $(\nabla \v_h)_{jk} = \partial_k v_{h,j}$.

For $\u_h \in \vec{\SS^1}(\TT_h)$, the constraint $|\u_h| = 1$ a.e.\ in $\Omega$ can only be satisfied by constant functions.
Therefore, it is natural to prescribe the constraint only at the nodes.
In analogy to Section~\ref{sec:results}, we define $\MM_h := \set{\u_h \in \vec{\SS^1}(\TT_h)}{|\u_h(\z)| = 1 \text{ for all } \z \in \NN_h}$ and $\KK_h[\u_h] := \set{\v_h \in \vec{\SS^1}(\TT_h)}{\v_h(\z) \cdot \u_h(\z) = 0 \text{ for all } \z \in \NN_h}$ as well as the nodal unit-length projection
\begin{align}
\label{eq:projection}
	\Pi_h &\colon
	\MM_h^+ \to \MM_h \colon
	\v_h \mapsto \sum_{\z \in \NN_h} \frac{\v_h(\z)}{|\v_h(\z)|} \varphi_{\z},\\
\nonumber
	&\text{ with }
	\MM_h^+ := \set[\big]{\v_h \in \vec{\SS^1}(\TT_h)}{|\v_h(\z)| \geq 1 \text{ for all } \z \in \NN_h}.
\end{align}
This means that, in general, the unit-length constraint as well as orthogonality cannot be achieved in the interior of triangles.
However, there still holds $|\u_h| \leq 1$ a.e.\ in $\Omega$ for any $\u_h \in \MM_h$.
We define the discrete version of~\eqref{eq:energy} as
\begin{equation}
\begin{split}
\label{eq:energy-discrete}
	\JJ_h(\v_h)
	&:=
	\frac{1}{2} a_h(\v_h, \v_h)
	\quad
	\text{for all } \v_h \in \vec{\SS^1}(\TT_h), \text{ where }\\
	a_h(\v_h, \w_h)
	&:=
	\int_\Omega \Big[ \nabla_\helical \v_h : \nabla_\helical \w_h + \II_h\big( g_\gamma(\v_h, \w_h) \big) \Big] \d{x}
	\quad
	\text{for all } \v_h, \w_h \in \vec{\SS^1}(\TT_h),
\end{split}
\end{equation}
where $\II_h \colon C(\overline{\Omega}) \to \SS^1(\TT_h)$, $\II_h f := \sum_{\z \in \NN_h} f(\z) \varphi_{\z}$ is the nodal interpolation operator.
Then, the discrete minimization problem reads:
\begin{equation}
\label{eq:minimization-discrete}
	\text{Find } \u_h \in \MM_h \text{ such that}
	\quad
	\JJ_h(\u_h)
	=
	\min_{\v_h \in \MM_h} \JJ_h(\v_h).
\end{equation}
The term $\int_\Omega \II_h(g_\gamma(\v_h, \v_h)) \d{x}$ will be referred to as \emph{mass lumping} term.
In analogy to~\cite{bartels05}, the discrete problem~\eqref{eq:minimization-discrete} is solved via a discretized version of Algorithm~\ref{alg:alouges}.

\begin{algorithm}[Discrete energy minimization]\label{alg:alouges-discrete}
	\textbf{Input:} Conforming simplicial triangulation $\TT_h$, initial guess $\u_h^{0} \in \MM_h$, tolerance $\mathrm{tol} > 0$.\\
	\textbf{Loop:} For all $n = 0, 1, 2, \ldots$ do
	\begin{itemize}
		\item[{\rm (i)}] Compute $\w_h^{n} \in \KK_h[\u_h^{n}]$ such that
		\begin{equation}
		\label{eq:discreteELbilinear}
			a_h(\w_h^{n}, \v_h)
			=
			a_h(\u_h^{n}, \v_h)
			\qquad \text{ for all } \v_h \in \KK_h[\u_h^{n}].
		\end{equation}
		
		\item[{\rm (ii)}] If $\JJ_h(\w_h^{n}) \leq \mathrm{tol}$, define $\u_h := \u_h^{n}$ and terminate.
	
		\item[{\rm (iii)}] Otherwise, set $\u_h^{n+1} := \Pi_h(\u_h^{n} - \w_h^{n})$.
	\end{itemize}
	\textbf{Output:} Approximate minimizer $\u_h \in \MM_h$ of $\JJ_h$.
\end{algorithm}

Convergence of Algorithm~\ref{alg:alouges-discrete} requires the following technical assumption on the triangulation.

\begin{definition}[Angle condition]\label{def:angle-condition}
	For any two triangles $T_1, T_2 \in \TT_h$ that share an edge (and, hence, two nodes), let $\alpha_1$ and $\alpha_2$ be the interior angles of the triangles $T_1$ and $T_2$ at the node which they do not share, respectively; see Figure~\ref{fig:angle-condition} for a visualization.
	We say that $\TT_h$ satisfies the angle condition, if there holds
	\begin{equation}
	\label{eq:angle-condition}
		\cot \alpha_1 + \cot \alpha_2 \geq 0
		\quad
		\text{for all neighboring } T_1, T_2 \in \TT_h.
	\end{equation}
	A sufficient condition for~\eqref{eq:angle-condition} to hold is $\alpha_1 + \alpha_2 \leq \pi$.
	In particular, this is clearly satisfied if all angles of $\TT_h$ are non-obtuse.
\end{definition}

\begin{figure}
	\begin{tikzpicture}
	\begin{scope}
		\coordinate (A) at (0,0);
		\coordinate (B) at (2,0);
		\coordinate (C) at (1,2);
		\coordinate (D) at (3,2);
		
		\draw (A) -- (B) -- (C) -- cycle;
		\draw (B) -- (D) -- (C);
		
		\node (M1) at ($2/8*(A)+3/8*(B)+3/8*(C)$) {$T_1$};
		\node (M2) at ($3/8*(B)+2/8*(D)+3/8*(C)$) {$T_2$}; 
		
		\draw pic[draw,angle radius=0.8cm,"$\alpha_1$"] {angle=B--A--C};
		\draw pic[draw,angle radius=0.8cm,"$\alpha_2$"] {angle=C--D--B};
	\end{scope}
		
	\begin{scope}[xshift=0.4\textwidth]
		\coordinate (A) at (0,0);
		\coordinate (B) at (2,0);
		\coordinate (C) at (1,2);
		\coordinate (D) at (3,2);
		
		\draw (A) -- (B) -- (D) -- cycle;
		\draw (A) -- (C) -- (D);
		
		\node (M1) at ($5/8*(A)+2/8*(B)+1/8*(D)$) {$T_1$};
		\node (M2) at ($0.58*(A)+0.24*(C)+0.18*(D)$) {$T_2$}; 
		
		\draw pic[draw,angle radius=0.8cm,"$\alpha_1$"] {angle=D--B--A};
		\draw pic[draw,angle radius=0.8cm,"$\alpha_2$"] {angle=A--C--D};
	\end{scope}
	\end{tikzpicture}
	\caption{Illustration of the angle condition with notation from Definition~\ref{def:angle-condition}.
	Left: configuration satisfying the angle condition.
	Right: configuration \emph{not} satisfying the angle condition.}
	\label{fig:angle-condition}
\end{figure}
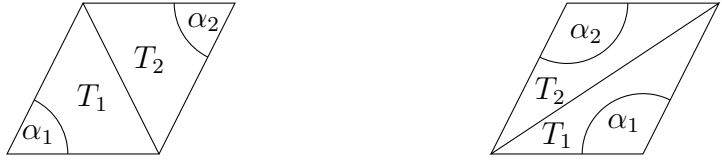

The next theorem states convergence of Algorithm~\ref{alg:alouges-discrete} to a stationary point of $\JJ$ for $\kappa = 0$; see also the discussion in Section~\ref{sec:helical-increase} on the case $\kappa \neq 0$. The proof of Theorem~\ref{th:convergence-discrete} is postponed to Section~\ref{sec:discrete} below.

\begin{theorem}\label{th:convergence-discrete}
	Let $\kappa = 0$. For any $h > 0$, let $\TT_h$ be a conforming simplicial triangulation with maximal mesh-width $h$ that satisfies the angle condition~\eqref{eq:angle-condition}.
	Let further $\mathrm{tol}_h > 0$ be a tolerance with $\mathrm{tol}_h \to 0$ as $h \to 0$. Then, there hold the following statements~{\rm(i)--(ii)}:

	{\rm (i)}
	Algorithm~\ref{alg:alouges-discrete} is well-defined, i.e., for any $\u_h^n \in \MM_h$, the variational problem~\eqref{eq:discreteELbilinear} admits a solution $\w_h^n \in \KK_h[\u_h^n]$ so that $\u_h^n - \w_h^n \in \MM_h^+$. Moreover, the termination criterion $\JJ_h(\w_h^n) \leq \mathrm{tol}_h$ is reached in finite time and Algorithm~\ref{alg:alouges-discrete} provides $\u_h = \u_h^n\in \MM_h$ with $\JJ_h(\u_h) \leq \JJ_h(\u_h^0)$ for some $n \in \N_0$.

	{\rm (ii)}
	Suppose that the initial guesses $\u_h^{0} \in \MM_h$ are uniformly bounded, i.e., there exists $C_0 > 0$ such that $\JJ_h(\u_h^{0}) \leq C_0$ for all $h>0$.
	Let $\u_h$ be the output of Algorithm~\ref{alg:alouges-discrete} with $(\TT_h, \u_h^{0}, \mathrm{tol}_h)$ as input.
	Then, there exists a weak solution $\u \in \MM$ of the Euler--Lagrange equations~\eqref{eq:el-strong} of $\JJ$ and a subsequence of $(\u_h)_h$ such that, along this subsequence,
	\begin{equation}
	\label{eq:convergence-discrete}
		\u_h \weak \u \quad \text{weakly in } \vec{H}^1(\Omega)
		\qquad\text{and}\qquad
		\JJ(\u)
		\leq
		\liminf_{h \to 0} \JJ_h(\u_h).
	\end{equation}
\end{theorem}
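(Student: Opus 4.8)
The plan is to mirror the five–step structure of the proof of Theorem~\ref{th:convergence} at the discrete level, using the angle condition precisely where the projection step no longer automatically decreases the energy. First I would establish part~(i). Well-posedness of step~(i) of Algorithm~\ref{alg:alouges-discrete} follows exactly as in Step~1 of the proof of Theorem~\ref{th:convergence}: since $\kappa=0$, the discrete bilinear form $a_h(\cdot,\cdot)$ is symmetric, non-negative, and coercive on $\KK_h[\u_h^n]$ (a finite-dimensional, hence reflexive, space), so the variational problem~\eqref{eq:discreteELbilinear} has a solution, and nodal orthogonality $\u_h^n(\z)\perp\w_h^n(\z)$ gives $|\u_h^n(\z)-\w_h^n(\z)|^2=|\u_h^n(\z)|^2+|\w_h^n(\z)|^2\geq1$, i.e.\ $\u_h^n-\w_h^n\in\MM_h^+$. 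The discrete analogue of Lemma~\ref{lemma:continuous-energy-decrease} (purely algebraic, choosing $\v_h=2\w_h^n$) gives $\JJ_h(\u_h^n-\w_h^n)=\JJ_h(\u_h^n)-\JJ_h(\w_h^n)$. The projection step requires the angle condition: for $\kappa=0$ one shows $\JJ_h(\Pi_h(\u_h^n-\w_h^n))\leq\JJ_h(\u_h^n-\w_h^n)$ by the standard argument of~\cite{bartels05}, where the Dirichlet part is controlled by $\sum_{\z\neq\z'}(\cot\alpha_1+\cot\alpha_2)\,|(\u_h-\w_h)(\z)-(\u_h-\w_h)(\z')|^2$ and the $g_\gamma$-mass-lumping part decreases nodewise since $|\Pi_h(\cdot)(\z)|=1\leq|(\u_h^n-\w_h^n)(\z)|$. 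Hence $\JJ_h(\u_h^{n+1})\leq\JJ_h(\u_h^n)-\JJ_h(\w_h^n)$, summing gives $\sum_n\JJ_h(\w_h^n)\leq\JJ_h(\u_h^0)<\infty$, so $\JJ_h(\w_h^n)\to0$ and the stopping criterion $\JJ_h(\w_h^n)\leq\mathrm{tol}_h$ is met after finitely many steps, with $\JJ_h(\u_h)\leq\JJ_h(\u_h^0)$.

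For part~(ii), I would first extract the weak limit. With $\kappa=0$, Lemma~\ref{lemma:lower-bound} reads $\|\nabla\u_h\|_{\vec L^2(\Omega)}^2\leq2\JJ_h(\u_h)$ at the discrete level (the mass-lumping term is non-negative), and $\|\u_h\|_{\vec L^2(\Omega)}^2\leq|\Omega|$ since $|\u_h|\leq1$ a.e. Combined with $\JJ_h(\u_h)\leq\JJ_h(\u_h^0)\leq C_0$, the family $(\u_h)_h$ is uniformly bounded in $\vec H^1(\Omega)$, so a subsequence converges weakly in $\vec H^1(\Omega)$ and strongly in $\vec L^2(\Omega)$ (and pointwise a.e.) to some $\u\in\vec H^1(\Omega)$. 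The unit-length constraint at the limit needs a short argument: from $|\u_h(\z)|=1$ at all nodes and strong $\vec L^2$-convergence one shows $|\u|=1$ a.e., e.g.\ by bounding $\|\,|\u_h|^2-1\|_{L^1(\Omega)}$ via a standard interpolation estimate $\|w_h-\II_h w_h\|_{L^1}\lesssim h\|\nabla w_h\|_{L^1}$ applied to $w_h=|\u_h|^2$, using $\|\nabla|\u_h|^2\|_{L^1}\lesssim\|\u_h\|_{L^\infty}\|\nabla\u_h\|_{L^2}\lesssim\|\nabla\u_h\|_{L^2}$, so that $\||\u_h|^2-\II_h(|\u_h|^2)\|_{L^1}=\||\u_h|^2-1\|_{L^1}\to0$; hence $\u\in\MM$.

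Next I would pass to the limit in the discrete Euler--Lagrange equations, which is the heart of the proof. Fix $\v\in C^\infty(\overline\Omega;\R^3)$. The natural discrete test function is $\v_h^n:=\II_h(\boldsymbol\varphi_h^n)$ where $\boldsymbol\varphi_h^n(\z):=\v(\z)\times\u_h^n(\z)-(\v(\z)\cdot\u_h^n(\z))\,\u_h^n(\z)$ is the nodal tangential projection of $\v$ onto $\KK_h[\u_h^n]$; note $\v_h^n\in\KK_h[\u_h^n]$ and, since $|\u_h^n(\z)|=1$, one has $\boldsymbol\varphi_h^n(\z)=\v(\z)\times\u_h^n(\z)$ exactly at nodes, so $\v_h^n=\II_h(\v\times\u_h^n)$. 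Testing~\eqref{eq:discreteELbilinear} with $\v_h^n$ and applying Cauchy--Schwarz for $a_h$ exactly as in~\eqref{eq:convergence-identity} gives
\begin{equation*}
	|a_h(\u_h^n,\v_h^n)|
	=|a_h(\w_h^n,\v_h^n)|
	\leq a_h(\w_h^n,\w_h^n)^{1/2}a_h(\v_h^n,\v_h^n)^{1/2}
	\lesssim\JJ_h(\w_h^n)^{1/2}\,\|\v_h^n\|_{\vec H^1(\Omega)},
\end{equation*}
and $\|\v_h^n\|_{\vec H^1(\Omega)}\lesssim\|\v\|_{W^{1,\infty}}\|\u_h^n\|_{\vec H^1(\Omega)}\lesssim1$ uniformly, so the right-hand side is $\lesssim\mathrm{tol}_h^{1/2}\to0$ along the output sequence. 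On the left, I expand $a_h(\u_h,\v_h)$, replace $\v_h=\II_h(\v\times\u_h)$ by $\v\times\u_h$ up to nodal-interpolation errors that vanish in $\vec H^1$ (using $\|\v\times\u_h-\II_h(\v\times\u_h)\|_{H^1}\lesssim h\,\|\v\|_{W^{1,\infty}}\,\|\u_h\|_{H^1}$ on each triangle, standard since $\kappa=0$ means no extra helical lower-order terms to track), and similarly drop the mass-lumping interpolation error in the $g_\gamma$-term. Using the triple-product identity $\nabla\u_h:\nabla(\v\times\u_h)=(\u_h\times\nabla\u_h):\nabla\v$ together with weak $\vec H^1$- and strong $\vec L^2$-convergence of $\u_h$, one obtains in the limit $\int_\Omega[(\u\times\nabla_\helical\u):\nabla_\helical\v+g_\gamma(\u,\v\times\u)]\,\d x=0$ first for $\v\in C^\infty(\overline\Omega;\R^3)$, then for all $\v\in\vec H^1(\Omega)\cap\vec L^\infty(\Omega)$ by density; by Lemma~\ref{lemma:el-equations}, $\u$ is a weak solution of~\eqref{eq:el-strong}. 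Finally the energy inequality $\JJ(\u)\leq\liminf_{h\to0}\JJ_h(\u_h)$ follows from weak lower semicontinuity of the (convex, continuous) functional $\v\mapsto\frac12\int_\Omega|\nabla_\helical\v|^2\d x$ together with the observation that the mass-lumping term $\frac12\int_\Omega\II_h(g_\gamma(\u_h,\u_h))\d x$ differs from $\frac12\int_\Omega g_\gamma(\u_h,\u_h)\d x$ by an interpolation error tending to $0$, while $g_\gamma(\cdot,\cdot)$ is continuous and the weak $\vec H^1$-convergence upgrades to strong $\vec L^2$-convergence.

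The main obstacle I expect is the limit passage in the mass-lumping and test-function interpolation terms: one must show that $\int_\Omega\II_h(g_\gamma(\u_h,\v_h^n))\d x$ and the difference between $\nabla_\helical\u_h:\nabla_\helical\II_h(\v\times\u_h)$ and $\nabla_\helical\u_h:\nabla_\helical(\v\times\u_h)$ both converge correctly. This is where the restriction $\kappa=0$ is genuinely used — with $\kappa\neq0$ the helical derivative introduces a zeroth-order term $\kappa\,\u_h\times\e_i$ inside $\nabla_\helical$ whose interaction with nodal interpolation spoils the clean energy-decrease in the projection step (this is precisely the phenomenon alluded to in Section~\ref{sec:helical-increase}); for $\kappa=0$ the interpolation estimates are the textbook ones and the argument closes. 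A secondary technical point is verifying $|\u|=1$ a.e.\ at the limit, handled by the $L^1$-interpolation estimate above.
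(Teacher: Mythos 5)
Your proposal is correct and follows essentially the same route as the paper: well-posedness and monotone energy decrease via the algebraic identity plus the angle-condition/mass-lumping argument, termination by summing $\JJ_h(\w_h^n)$, uniform $\vec H^1$-bounds, the unit-length constraint via nodal interpolation and inverse estimates applied to $|\u_h|^2$, limit passage in the discrete Euler--Lagrange equations with the test function $\II_h(\v\times\u_h)$, and weak lower semicontinuity plus convergence of the lumped $g_\gamma$-term for the energy inequality. The only differences are cosmetic: you bound $a_h(\u_h,\v_h^n)=a_h(\w_h^n,\v_h^n)$ by Cauchy--Schwarz in the $a_h$ semi-inner product (as in the continuous proof) instead of the paper's explicit splitting of $a_h(\u_h,\overline\v)$ into interpolation-error and $\w_h$ contributions, and two harmless slips should be fixed in a write-up — $a_h$ is only positive semidefinite (not coercive) on $\KK_h[\u_h^n]$, so existence should be argued via the compatible singular linear system or convex minimization as in the paper, and the interpolation bound for $\II_h(\v\times\u_h)$ needs $\norm{\v}_{\vec W^{2,\infty}}$ rather than $\norm{\v}_{\vec W^{1,\infty}}$, which is immaterial for smooth $\v$.
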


\begin{remark}
	Note that the proof of well-definedness of the discrete problem~\eqref{eq:discreteELbilinear} below does not make use of $\kappa = 0$.
	Thus, Algorithm~\ref{alg:alouges-discrete} is well-defined for all $\kappa \in \R$. However, the current proof of energy reduction and hence convergence (resp.\ termination) relies on $\kappa = 0$, even though Algorithm~\ref{alg:alouges-discrete} empirically appears to  converge for any $\kappa \in \R$; see Section~\ref{sec:reduced-model} below for a related practical experiment.
\end{remark}

As for the continuous case in Section~\ref{sec:results}, one can prove that the discrete tangential update $\w_h^n \in \KK[\u_h^n]$ of~\eqref{eq:discreteELbilinear} exists, but can be non-unique in general.
The next proposition comments on the uniqueness of the tangential update in the discrete setting.
It suggests additional linear constraints to add to the discrete system of equations of~\eqref{eq:weakELbilinear} to enforce uniqueness.

\begin{proposition}
	\label{prop:numerical-uniqueness}
	Suppose that $\Omega$ is connected.
	Let $\u_h \in \MM_h$ and $\w_h^1, \w_h^2 \in \KK_h[\u_h]$ be two solutions of the discrete Euler--Lagrange equations~\eqref{eq:discreteELbilinear}, i.e.,
	\begin{equation*}
		a_h(\w_h^1, \v_h)
		=
		a_h(\u_h, \v_h)
		=
		a_h(\w_h^2, \v_h)
		\quad \text{for all } \v_h \in \KK_h[\u_h].
	\end{equation*}
	Then, there hold the following assertions:
	\begin{enumerate}
		\item[\rm(a)] For $\kappa = 0$, the difference $\w_h^1 - \w_h^2 \in \KK_h[\u_h]$ is constant and satisfies
		\begin{itemize}
			\item $\w_h^1(\constvec{z}) - \w_h^2(\constvec{z}) \perp \e_3$ for all $\constvec{z} \in \NN_h$ if $\gamma >0$,
			\item $\w_h^1(\constvec{z}) - \w_h^2(\constvec{z}) \in \mathrm{span}\{\e_3\}$ for all $\constvec{z} \in \NN_h$ if $\gamma <0$.
		\end{itemize}
		
		\item[\rm(b)] For $\kappa \neq 0$, i.e., in the presence of the anti-symmetric exchange term, the difference $\w_h^1 - \w_h^2$ vanishes.
		In particular, the solution of~\eqref{eq:weakELbilinear} is unique.
	\end{enumerate}
	Moreover, let $\kappa = 0$ and let
	\begin{equation*}
		B
		:=
		\begin{cases}
			\set{\u_h(\constvec{z})}{\constvec{z} \in \NN_h} & \text{if } \gamma = 0, \\
			\set{\u_h(\constvec{z})}{\constvec{z} \in \NN_h} \cup \{\e_3\} & \text{if } \gamma > 0, \\
			\set{\u_h(\constvec{z})}{\constvec{z} \in \NN_h} \cup \{\e_1, \e_2\} & \text{if } \gamma < 0.
		\end{cases}
	\end{equation*}
	Let further $B_\perp$ be an orthonormal basis of the orthogonal complement $(\mathrm{span} \, B)^\perp$.
	Let $m$ be the cardinality of $B_\perp = \{\constvec{b}_1, \ldots, \constvec{b}_m\}$.
	Adding the linear constraints $\int_\Omega \w_h \cdot \constvec{b} \d{x} = 0$ for all $\constvec{b} \in B_\perp$ to~\eqref{eq:discreteELbilinear} as Lagrange multipliers yields the following saddle point problem: 
	find $(\w_h, \constvec{\lambda}) \in \KK_h[\u_h] \times \R^m$ such that
	\begin{equation}\label{eq:saddle-point}
	\begin{split}
		a_h(\w_h, \v_h) + b_h(\v_h, \constvec{\lambda})
		&= a_h(\u_h, \v_h) \phantom{0}
		\text{ for all } \v_h \in \KK_h[\u_h], \\
		b_h(\w_h, \constvec{\mu}) \phantom{{}+ b_h(\v_h, \constvec{\lambda})}
		&= 0 \phantom{a_h(\u_h, \v_h)} \text{ for all } \constvec{\mu} \in \R^m
	\end{split}
	\end{equation}
	with the bilinear form $b_h \colon \KK_h[\u_h] \times \R^m \to \R$ defined by
	\begin{equation*}
		b_h(\v_h, \constvec{\mu})
		:=
		\sum_{i=1}^m \mu_i \int_\Omega \v_h \cdot \constvec{b}_i \d{x}
		\quad
		\text{for all } \v_h \in \KK_h[\u_h], \constvec{\mu} \in \R^m.
	\end{equation*}
	The saddle point problem~\eqref{eq:saddle-point} admits a unique solution $(\w_h, \constvec{\lambda}) \in \KK_h[\u_h] \times \R^m$ and $\w_h$ solves~\eqref{eq:discreteELbilinear}.
\end{proposition}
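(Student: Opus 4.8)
The plan is to dispatch the non-uniqueness characterization~(a)--(b) first — the discrete counterpart of Proposition~\ref{prop:w-uniqueness} — and then to establish well-posedness and consistency of the modified system~\eqref{eq:saddle-point}. For~(a)--(b), set $\v_h := \w_h^1 - \w_h^2 \in \KK_h[\u_h]$. By hypothesis $a_h(\w_h^1, \cdot) = a_h(\u_h, \cdot) = a_h(\w_h^2, \cdot)$ on $\KK_h[\u_h]$, hence $a_h(\v_h, \cdot) = 0$ on $\KK_h[\u_h]$; testing with $\v_h$ itself gives
\begin{equation*}
  0 = a_h(\v_h, \v_h)
    = \int_\Omega |\nabla_\helical \v_h|^2 \d{x}
      + \int_\Omega \II_h\big( g_\gamma(\v_h, \v_h) \big) \d{x}.
\end{equation*}
Since the hat functions are non-negative and $g_\gamma(\constvec{w}, \constvec{w}) \ge 0$ for every $\constvec{w} \in \R^3$, both integrals vanish, so $\nabla_\helical \v_h = 0$ a.e.\ in $\Omega$ and $g_\gamma(\v_h(\z), \v_h(\z)) = 0$ for all $\z \in \NN_h$. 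For $\kappa = 0$ the first identity reads $\nabla \v_h = 0$, so $\v_h$ is a constant vector on the connected domain $\Omega$; the nodal conditions then force $v_{h,3} \equiv 0$ (hence $\v_h \perp \e_3$) if $\gamma > 0$ and $v_{h,1} \equiv v_{h,2} \equiv 0$ (hence $\v_h \in \Span\{\e_3\}$) if $\gamma < 0$, which is~(a). For $\kappa \neq 0$ I would invoke a discrete rigidity argument: on each triangle $T$ the components of $\partial_i \v_h$ are constant and those of $\v_h \times \e_i$ affine, so $\partial_i \v_h + \kappa\, \v_h \times \e_i = 0$ with $\kappa \neq 0$ forces $\v_h \times \e_i$ — and hence $\v_h$ — to be constant on $T$; being $H^1$ on the connected $\Omega$, $\v_h$ is then a single constant, and $\kappa\, \v_h \times \e_1 = \kappa\, \v_h \times \e_2 = \vec{0}$ yields $\v_h = \vec{0}$, which is~(b) — uniformly in $\gamma$, unlike the continuous argument.

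\emph{The null space of~\eqref{eq:discreteELbilinear}.} Let $N \subset \vec{\SS^1}(\TT_h)$ be the space of \emph{constant} vector fields $\constvec{c}$ with $\constvec{c} \perp \Span B$; by the definitions of $B$ and $B_\perp$ one has $N = \Span B_\perp$ with $\dim N = m$, and $N \subseteq \KK_h[\u_h]$ since $\{\u_h(\z) : \z \in \NN_h\} \subseteq \Span B$. The key observation is that, for $\kappa = 0$, $a_h(\constvec{c}, \v_h) = 0$ for every $\constvec{c} \in N$ and every $\v_h \in \vec{\SS^1}(\TT_h)$: indeed $\nabla \constvec{c} = 0$, while the definition of $B$ forces $c_3 = 0$ when $\gamma > 0$ and $c_1 = c_2 = 0$ when $\gamma < 0$, so $g_\gamma(\constvec{c}, \v_h)$ vanishes at every node. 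Together with~(a) and its converse ($\w_h + \constvec{c}$ solves~\eqref{eq:discreteELbilinear} whenever $\w_h$ does and $\constvec{c} \in N$, using $N \subseteq \KK_h[\u_h]$), this shows that the solution set of~\eqref{eq:discreteELbilinear} is an affine space of the form $\w_h^\ast + N$, and the added conditions $\int_\Omega \w_h \cdot \constvec{b}_i \d{x} = 0$ pick out exactly one of its points.

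\emph{Well-posedness and consistency of~\eqref{eq:saddle-point}.} Since $\KK_h[\u_h] \times \R^m$ is finite-dimensional and~\eqref{eq:saddle-point} is a square linear system in $(\w_h, \constvec{\lambda})$, unique solvability is equivalent to triviality of the homogeneous problem. So let $(\w_h, \constvec{\lambda})$ solve~\eqref{eq:saddle-point} with vanishing right-hand side. The second equation gives $\int_\Omega \w_h \cdot \constvec{b}_i \d{x} = 0$ for all $i$; testing the first with $\v_h = \w_h$ and using this, $a_h(\w_h, \w_h) = 0$. Arguing as for~(a), $\w_h$ is a constant $\constvec{c}$ with $\constvec{c} \perp \Span B$ (so $\constvec{c} \in N$) and $\constvec{c} \perp \constvec{b}_i$ for all $i$ (so $\constvec{c} \perp N$), whence $\w_h = \vec{0}$; then $b_h(\v_h, \constvec{\lambda}) = 0$ for all $\v_h \in \KK_h[\u_h]$, and the choice $\v_h := \sum_{i=1}^m \lambda_i \constvec{b}_i \in N \subseteq \KK_h[\u_h]$ gives $|\Omega|\,|\constvec{\lambda}|^2 = 0$ by orthonormality of $B_\perp$, i.e.\ $\constvec{\lambda} = \vec{0}$. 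Hence~\eqref{eq:saddle-point} has a unique solution $(\w_h, \constvec{\lambda})$. To see that $\w_h$ solves~\eqref{eq:discreteELbilinear}, test the first equation of~\eqref{eq:saddle-point} with $\v_h := \sum_{i=1}^m \lambda_i \constvec{b}_i \in N \subseteq \KK_h[\u_h]$: the key observation yields $a_h(\w_h, \v_h) = a_h(\u_h, \v_h) = 0$, hence $b_h(\v_h, \constvec{\lambda}) = |\Omega|\,|\constvec{\lambda}|^2 = 0$ and thus $\constvec{\lambda} = \vec{0}$, which reduces~\eqref{eq:saddle-point} to $a_h(\w_h, \v_h) = a_h(\u_h, \v_h)$ for all $\v_h \in \KK_h[\u_h]$.

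I expect the main obstacle to be the rigidity step showing that $a_h(\w_h, \w_h) = 0$ together with $\w_h \in \KK_h[\u_h]$ and the added constraints $\int_\Omega \w_h \cdot \constvec{b}_i \d{x} = 0$ forces $\w_h = \vec{0}$ — in particular the degenerate case $\gamma = 0$, where the lumped anisotropy contributes nothing and rigidity must come from $\nabla \w_h = 0$, the nodal orthogonality $\w_h(\z) \perp \u_h(\z)$, and the extra conditions alone; the piecewise-affine rigidity underlying~(b) is a secondary, milder point.
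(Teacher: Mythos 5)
Your proof is correct, and it follows the paper's overall skeleton: deduce $a_h(\v_h,\v_h)=0$ for the difference, exploit non-negativity of both the helical and the lumped anisotropy term, identify the constant null directions, and treat the augmented system with constant test functions built from $B_\perp$. Two ingredients, however, differ genuinely from the paper. First, for assertion (b) the paper simply transfers the continuous argument of Proposition~\ref{prop:w-uniqueness}, i.e., the componentwise system~\eqref{eq:eq4v1} with its case distinction on $\gamma$; your piecewise-affine rigidity argument (on each triangle $\partial_i \v_h$ is constant while $\v_h \times \e_i$ is affine, so $\nabla_\helical \v_h = 0$ with $\kappa \neq 0$ forces $\v_h$ to be constant on each element and then zero) is shorter, purely local, and, as you note, independent of $\gamma$ --- a simplification that is available only at the discrete level. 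Second, for unique solvability of~\eqref{eq:saddle-point} the paper invokes the Brezzi theorem, verifying surjectivity of $\BB \colon \v_h \mapsto b_h(\v_h,\cdot)$ with the constant test function $\sum_i \mu_i \constvec{b}_i$ and coercivity of $a_h$ on $\ker \BB$ via (a)--(b); you instead observe that the system is square and finite-dimensional and prove injectivity of the homogeneous problem, an equivalent and more elementary route (the Brezzi argument would additionally yield inf-sup stability constants, which are not needed for the claim). Your explicit \emph{key observation} that $a_h(\constvec{c},\cdot) \equiv 0$ for constants $\constvec{c} \in \Span B_\perp$ when $\kappa = 0$ also makes the final consistency step transparent (it forces $\constvec{\lambda} = \constvec{0}$, so $\w_h$ solves~\eqref{eq:discreteELbilinear}); the paper performs the same step by testing with $\v_h = \constvec{b}_i$ and leaves this observation implicit. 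Your characterization of the full solution set of~\eqref{eq:discreteELbilinear} as $\w_h^\ast + \Span B_\perp$ is extra, but correct and harmless.
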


\begin{proof}
	The statements (a)--(b) follow analogously to the proof of Proposition~\ref{prop:w-uniqueness}.
	To show that the saddle point problem~\eqref{eq:saddle-point} admits a unique solution, we employ the Brezzi theorem~\cite{bbf2013}, i.e., we show that the operator $\BB \colon \KK_h[\u_h] \to (\R^m)^* : \v_h  \mapsto b_h(\v_h, \cdot)$ is surjective and that $a_h(\cdot, \cdot)$ is coercive on the kernel of $\BB$.

	To show surjectivity, it suffices to show that, for any $\constvec{\mu} \in \R^m \setminus \{\constvec{0}\}$, there exists $\v_h \in \KK_h[\u_h]$ such that $b_h(\v_h, \constvec{\mu}) \neq 0$.
	Given $\constvec{\mu} \in \R^m \setminus \{\constvec{0}\}$, we define the constant function $\v_h := \sum_{i=1}^m \mu_i \constvec{b}_i$.
	From the definition of $B_\perp$, we obtain that $\v_h \in \KK_h[\u_h]$ and
	\begin{equation*}
		b_h(\v_h, \constvec{\mu})
		=
		\sum_{i=1}^m \mu_i \int_\Omega \Bigl( \sum_{j=1}^m \mu_j \constvec{b}_j \Bigr) \cdot \constvec{b}_i \d{x}
		=
		\sum_{i=1}^m \mu_i^2 \int_\Omega 1 \d{x}
		=
		|\constvec{\mu}|^2 |\Omega| > 0.
	\end{equation*}

	To show coercivity of $a_h(\cdot, \cdot)$ on the kernel of $\BB$, let $\v_h \in \KK_h[\u_h]$ with $\BB\v_h = \constvec{0}$ and note that $a(\v_h, \v_h) = 0$ implies that $\v_h \equiv \constvec{c} \in \R^3$ is constant as well as $\constvec{c} \perp \mathrm{span} \, B$ according to the statements (a)--(b) of the proposition.
	Since $|\Omega| \constvec{c} \cdot \constvec{b} = \int_\Omega \v_h \cdot \constvec{b} \d{x} = b_h(\v_h, \constvec{b}) = 0$ for all $\constvec{b} \in B_\perp$ due to $\BB\constvec{c} = \BB\v_h = \constvec{0}$, we obtain that also $\constvec{c} \perp \mathrm{span} \, B_\perp = (\mathrm{span} \, B)^\perp$ and, hence, $\constvec{c} = \constvec{0}$.

	Thus, the assumptions of the Brezzi theorem are satisfied and the saddle point problem~\eqref{eq:saddle-point} admits a unique solution $(\w_h, \constvec{\lambda}) \in \KK_h[\u_h] \times \R^m$.
	To show that $\w_h$ solves~\eqref{eq:discreteELbilinear}, we test~\eqref{eq:saddle-point} with the constant functions $\v_h := \constvec{b}_i \in \KK_h[\u_h]$ for $i = 1, \ldots, m$.
	This yields that $b_h(\constvec{b}_i, \constvec{\lambda}) = 0$ for all $i = 1, \ldots, m$ and, hence, $\constvec{\lambda} = \constvec{0} \in \R^m$.
	Thus, we see that $b_h(\v_h, \constvec{\lambda}) = 0$ for all $\v_h \in \KK_h[\u_h]$ and, by plugging this into~\eqref{eq:saddle-point}, that $\w_h$ solves~\eqref{eq:discreteELbilinear}.
\end{proof}

\begin{remark}
	Two comments on the practical implementation of the discrete system~\eqref{eq:discreteELbilinear} with additional constraints are in order.

	\textbf{\upshape (i)} 
	First, recall from ~\cite{MR3995784} how the solution to equation~\eqref{eq:discreteELbilinear} can be computed using Householder matrices:
	Let $\constvec{u} \in \R^3$ be a vector with $|\constvec{u}| = 1$.
	Then, if $\constvec{u} = -\e_3$, define the diagonal matrix $\constvec{Q}[\constvec{u}] := \mathrm{diag}(-1,-1,1) \in \R^{3 \times 3}$, else define, with the identity matrix $\constvec{I}_{3 \times 3} \in \R^{3 \times 3}$,
	\begin{equation*}
		\constvec{Q}[\constvec{u}] := \constvec{I}_{3 \times 3} - 2 \constvec{v} \constvec{v}^\transposed \in \R^{3 \times 3},
		\quad \text{where} \quad
		\constvec{v} := \frac{\constvec{u} + \e_3}{|\constvec{u} + \e_3|} \in \R^3 \quad
		\text{with } |\constvec{v}| = 1.
	\end{equation*}
	Note that $\constvec{Q}[\constvec{u}]$ is orthogonal and $(\constvec{Q}[\constvec{u}]\e_i)_{i=1}^3$ is an orthonormal basis of $\R^3$ with $\constvec{Q}[\constvec{u}] \e_3 = -\constvec{u}$ and, consequently, $\constvec{Q}[\constvec{u}] \e_i \perp \constvec{u}$ for $i = 1, 2$.
	We then define the prolongation operator $\QQ[\constvec{u}] \colon \R^2 \to \R^3$ by
	\begin{equation*}
		\QQ[\constvec{u}] \widehat{\constvec{w}}
		:=
		\sum_{i=1}^2 \widehat{w}_i \constvec{Q}[\constvec{u}] \e_i
		\quad
		\text{for all } \widehat{\constvec{w}} \in \R^2
	\end{equation*}
	and, for $\u_h \in \MM_h$, the discrete prolongation operator
	\begin{equation*}
		\QQ_h[\u_h] \colon [\SS^1(\TT_h)]^2 \to \vec{\SS}^1(\TT_h) \colon
		\QQ_h[\u_h] \widehat{\w}_h
		:=
		\sum_{\z \in \NN_h} \QQ[\u_h(\z)] \widehat{\w}_h(\z) \varphi_{\z}.
	\end{equation*}
	These definitions yield that $\QQ_h[\u_h] \widehat{\w}_h \in \KK_h[\u_h]$ for all $\w_h \in [\SS^1(\TT_h)]^2$.
	Thus, the discrete Euler--Lagrange equations~\eqref{eq:discreteELbilinear} can be transformed to a system of equations in $[\SS^1(\TT_h)]^2$ via the bijective map $\QQ_h[\u_h] \colon [\SS^1(\TT_h)]^2 \to \KK_h[\u_h]$:
	find $\widehat{\w}_h \in [\SS^1(\TT_h)]^2$ such that
	\begin{equation*}
		a_h(\QQ_h[\u_h] \widehat{\w}_h, \QQ_h[\u_h] \widehat{\v}_h)
		=
		a_h(\u_h, \QQ_h[\u_h] \widehat{\v}_h)
		\quad \text{for all } \widehat{\v}_h \in [\SS^1(\TT_h)]^2,
	\end{equation*}
	and set $\w_h := \QQ_h[\u_h] \widehat{\w}_h$.
	This system of $2\#\NN_h$ equations can be solved efficiently by iterative solvers, e.g., the generalized minimal residual method (GMRES) from~\cite{ss1986}.

	\textbf{\upshape (ii)} 
	With $B$ from Proposition~\ref{prop:numerical-uniqueness}, in most practical situations there holds that $\mathrm{span} \, B = \R^3$ so that the tangential update $\w_h$ is unique without additional constraints.
	It is easy to check the dimension of $\mathrm{span} \, B$ to determine whether additional constraints are necessary.

	E.g., if $\gamma = 0$, take an arbitrary node $\z^0 \in \NN_h$ and compute $\constvec{p}_{\z} := \u_h(\z^0) \times \u_h(\z)$ for all $\z \in \NN_h$.
	If $\constvec{p}_{\z} = \constvec{0}$ for all $\z \in \NN_h$, then $B_\perp$ consists of any two orthonormal vectors orthogonal to $\u_h(\z^0)$ and two constraints need to be added to~\eqref{eq:discreteELbilinear}.
	Otherwise, let $\z^1 \in \NN_h$ be any node with $\constvec{p}_{\z^1} \neq \constvec{0}$.
	If $\constvec{p}_{\z^1} \cdot \u_h(\z) = 0$ for all $\z \in \NN_h$, then $B_\perp = \{\constvec{p}_{\z^1} / |\constvec{p}_{\z^1}|\}$ and one constraint needs to be added to~\eqref{eq:discreteELbilinear}.
	Otherwise, $B_\perp = \{\constvec{0}\}$ and no additional constraint is necessary.
	
	The cases $\gamma > 0$ and $\gamma < 0$ can be treated analogously.
\end{remark}

\subsection{Proof of Theorem~\ref{th:convergence-discrete}}\label{sec:discrete}

To prove convergence of Algorithm~\ref{alg:alouges-discrete}, we intend to mimic the argument for the proof of convergence of Algorithm~\ref{alg:alouges}. First, we ensure that both steps of Algorithm~\ref{alg:alouges-discrete} decrease the discrete energy $\JJ_h$. We start with step~(i).

\begin{lemma}\label{lemma:discrete-energy-decrease}
	Let $\u_h \in \vec{S}^1(\TT_h)$ and $\w_h \in \KK_h[\u_h]$ such that $a_h(\w_h, \v_h) = a_h(\u_h, \v_h)$ for all $\v_h \in \KK_h[\u_h]$.
	Then, it holds that
	\begin{equation}
	\label{eq:discrete-energy-decrease}
		\JJ_h(\u_h - \w_h)
		=
		\JJ_h(\u_h) - \JJ_h(\w_h) \le \JJ_h(\u_h).
	\end{equation}
\end{lemma}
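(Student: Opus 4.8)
The plan is to replicate the proof of Lemma~\ref{lemma:continuous-energy-decrease} verbatim at the discrete level, the only genuinely new point being that one must first record the non-negativity of $\JJ_h$. To this end I would observe that $a_h$ is a symmetric bilinear form on $\vec{\SS^1}(\TT_h)$ with $\JJ_h(\v_h) = \tfrac12 a_h(\v_h,\v_h)$, and that, by definition~\eqref{eq:energy-discrete} of $\II_h$ together with linearity of the integral,
\[
	a_h(\v_h, \v_h)
	=
	\norm{\nabla_\helical \v_h}_{\vec{L}^2(\Omega)}^2
	+ \sum_{\z \in \NN_h} g_\gamma(\v_h(\z), \v_h(\z)) \int_\Omega \varphi_\z \d{x}.
\]
Since $\int_\Omega \varphi_\z \d{x} > 0$ for every node $\z \in \NN_h$ and $g_\gamma(\constvec{w},\constvec{w}) \ge 0$ for all $\constvec{w} \in \R^3$ by~\eqref{eq:9+}, both contributions are non-negative; hence $\JJ_h(\v_h) \ge 0$ for all $\v_h \in \vec{\SS^1}(\TT_h)$. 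I would state this explicitly since the same fact is used again in the proof of Theorem~\ref{th:convergence-discrete}.

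With this in hand the argument is purely algebraic and identical to the continuous case. Since $\KK_h[\u_h]$ is a linear subspace of $\vec{\SS^1}(\TT_h)$, the test function $\v_h := 2\w_h \in \KK_h[\u_h]$ is admissible in the Galerkin identity $a_h(\u_h - \w_h, \v_h) = 0$ coming from the hypothesis $a_h(\w_h, \v_h) = a_h(\u_h, \v_h)$ for all $\v_h \in \KK_h[\u_h]$. Expanding with bilinearity and symmetry of $a_h$ gives
\[
	0 = a_h(\u_h - \w_h, 2\w_h)
	= a_h(\u_h,\u_h) - a_h(\u_h - \w_h, \u_h - \w_h) - a_h(\w_h,\w_h)
	= 2\JJ_h(\u_h) - 2\JJ_h(\u_h - \w_h) - 2\JJ_h(\w_h),
\]
which yields the identity $\JJ_h(\u_h - \w_h) = \JJ_h(\u_h) - \JJ_h(\w_h)$; the inequality $\JJ_h(\u_h - \w_h) \le \JJ_h(\u_h)$ then follows at once from $\JJ_h(\w_h) \ge 0$.

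I do not anticipate any real obstacle here: the finite element structure plays no role beyond ensuring that $\KK_h[\u_h]$ is a linear space (so that $2\w_h$ is again admissible) and that $a_h$ is symmetric and bilinear, which is immediate from~\eqref{eq:energy-discrete}. The only point deserving an explicit line of justification — and the sole place where the discretization differs from Lemma~\ref{lemma:continuous-energy-decrease} — is the non-negativity of the mass-lumping term, which, as noted above, reduces to the pointwise positive semidefiniteness of $g_\gamma$ combined with the positivity of the nodal-basis integrals $\int_\Omega \varphi_\z \d{x}$.
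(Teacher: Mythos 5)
Your proof is correct and is exactly the argument the paper intends: the paper's own proof simply states that it is analogous to Lemma~\ref{lemma:continuous-energy-decrease}, i.e., testing with $\v_h = 2\w_h$ and using symmetry, bilinearity, and non-negativity of $\JJ_h$. Your explicit verification that the mass-lumping term is non-negative (via $g_\gamma(\constvec{w},\constvec{w}) \ge 0$ and $\int_\Omega \varphi_\z \d{x} > 0$) is a correct and welcome elaboration of the one point the paper leaves implicit.
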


\begin{proof}
	The proof is analogous to the continuous case; see Lemma~\ref{lemma:continuous-energy-decrease}.
\end{proof}

In contrast to the continuous case, $|\u_h - \w_h| \geq 1$ is not guaranteed to hold true except at vertices $\z \in \NN_h$.
Therefore, the analysis of the projection in step~\textrm{(iii)} requires a different approach from the continuous case.
We note that the angle condition is crucial for treating the exchange energy~\cite[Lemma~3.2]{bartels05}.
For the convenience of the reader, we include the main argument of~\cite{bartels05} in the proof of the next lemma.

\begin{lemma}\label{lemma:discrete-projection-decrease}
	Let $\kappa = 0$ and let $\TT_h$ satisfy the angle condition~\eqref{eq:angle-condition}. 
	Then, it holds that
	\begin{equation}
	\label{eq:discrete-projection-decrease}
		\JJ_h(\Pi_h(\v_h))
		\leq
		\JJ_h(\v_h)
		\qquad \text{for all }
		\v_h \in \MM_h^+.
	\end{equation}
\end{lemma}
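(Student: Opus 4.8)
The plan is to reduce the discrete projection estimate to two separate inequalities for the two pieces of $\JJ_h$: the Dirichlet (exchange) part $\frac12\int_\Omega |\nabla \v_h|^2 \d{x}$ and the mass-lumping part $\frac12\int_\Omega \II_h(g_\gamma(\v_h,\v_h)) \d{x}$. Since $\kappa = 0$, the helical gradient $\nabla_\helical \v_h$ coincides with the ordinary gradient $\nabla \v_h$, so the first part is precisely the setting of~\cite[Lemma~3.2]{bartels05}, while the second part is a purely nodal quantity and hence elementary.

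First I would treat the mass-lumping term. By the definition~\eqref{eq:projection} of $\Pi_h$ we have $|\Pi_h(\v_h)(\z)| = 1$ whenever $|\v_h(\z)| \geq 1$, and more precisely $\Pi_h(\v_h)(\z) = \v_h(\z)/|\v_h(\z)|$. Exactly as in the continuous Lemma~\ref{lemma:continuous-projection-decrease}, $|\Pi_h(\v_h)(\z)\cdot\e_i|^2 = |\v_h(\z)\cdot\e_i|^2/|\v_h(\z)|^2 \leq |\v_h(\z)\cdot\e_i|^2$ for each node $\z$ and $i=1,2,3$, so by~\eqref{eq:9+} (the nodewise analogue of~\eqref{eq:projection-inequality-gamma}) we get $g_\gamma(\Pi_h(\v_h)(\z),\Pi_h(\v_h)(\z)) \leq g_\gamma(\v_h(\z),\v_h(\z))$ at every node. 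Applying $\II_h$ (which is monotone on nodal values) and integrating yields $\int_\Omega \II_h(g_\gamma(\Pi_h(\v_h),\Pi_h(\v_h))) \d{x} \leq \int_\Omega \II_h(g_\gamma(\v_h,\v_h)) \d{x}$.

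Next I would handle the Dirichlet term, which is the substantive part of the argument. Here the key observation, due to Bartels, is that $\int_\Omega |\nabla \v_h|^2 \d{x}$ can be written as a sum over interior edges $E = \mathrm{conv}\{\z,\z'\}$ shared by neighbouring triangles $T_1,T_2$, with the edge weighted by $\frac12(\cot\alpha_1 + \cot\alpha_2) \geq 0$ (the angle condition) times $|\v_h(\z) - \v_h(\z')|^2$, plus boundary-edge contributions that are handled similarly and remain nonnegative. Since for unit-sphere projections one has the elementary pointwise estimate $|\Pi(\constvec{a}) - \Pi(\constvec{b})| \leq |\constvec{a} - \constvec{b}|$ for $|\constvec{a}|,|\constvec{b}| \geq 1$ (proved by a short direct computation, e.g.\ writing $\Pi(\constvec a)-\Pi(\constvec b)$ and using $|\constvec a|,|\constvec b|\ge 1$), each edge term decreases under $\Pi_h$, and the nonnegativity of the weights lets us sum these to conclude $\int_\Omega |\nabla \Pi_h(\v_h)|^2 \d{x} \leq \int_\Omega |\nabla \v_h|^2 \d{x}$. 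Adding the two halves gives $\JJ_h(\Pi_h(\v_h)) = \frac12\int_\Omega |\nabla \Pi_h(\v_h)|^2 + \frac12\int_\Omega \II_h(g_\gamma(\Pi_h(\v_h),\Pi_h(\v_h))) \leq \JJ_h(\v_h)$.

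The main obstacle is the Dirichlet part: establishing the edge-weighted representation of $\int_\Omega|\nabla\v_h|^2\d{x}$ on $\TT_h$ with the cotangent weights, and making sure the boundary terms carry the right sign so that monotonicity of $\Pi_h$ can be applied termwise. Since this is precisely~\cite[Lemma~3.2]{bartels05}, I would quote that computation and only reproduce its core identity for the reader's convenience, emphasizing where the angle condition~\eqref{eq:angle-condition} enters. Everything else — the nodal projection estimate, the monotonicity of $\II_h$, and the assembly — is routine and mirrors the continuous proof.
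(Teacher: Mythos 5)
Your proposal is correct and follows essentially the same route as the paper: the mass-lumping term is handled by the nodewise estimate $g_\gamma(\Pi_h\v_h(\z),\Pi_h\v_h(\z))\leq g_\gamma(\v_h(\z),\v_h(\z))$, and the Dirichlet part uses Bartels' edge-based (cotangent-weighted) rewriting of $\int_\Omega|\nabla\v_h|^2\d{x}$, which is exactly the paper's stiffness-matrix identity with nonpositive off-diagonal entries under the angle condition, combined with the $1$-Lipschitz property of the nodal normalization. The only difference is presentational (edge sum versus stiffness-matrix row-sum argument), so no further comment is needed.
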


\begin{proof}
	The proof is split into two steps.
	
	\textbf{Step 1:} We recall the argument of~\cite{bartels05} to show that
	\begin{equation}
	\label{eq:decrease-exchange}
		\int_\Omega | \nabla (\Pi_h \v_h) |^2 \d{x}
		\leq
		\int_\Omega | \nabla \v_h |^2 \d{x}.
	\end{equation} 
	Note that the angle condition~\eqref{eq:angle-condition} implies that the off-diagonal entries 
	\begin{equation*}
		A_{\z\z'}
		:=
		\int_{\Omega} \nabla \varphi_{\z} \cdot \nabla \varphi_{\z'} \d{x}
		=
		- (\cot \alpha_1 + \cot \alpha_2) \le 0
	\end{equation*}
	of the stiffness matrix $A$ are non-positive.
	For any function $\v_h \in \vec{\SS}^1(\TT_h)$, we have that
	\begin{equation*}
		\int_\Omega |\nabla \v_h|^2 \d{x}
		=
		\sum_{\z,\z' \in \NN_h} \v_h(\z) \cdot \v_h(\z') \int_\Omega \nabla \varphi_{\z} \cdot \nabla \varphi_{\z'} \d{x}
		=
		\sum_{\z,\z' \in \NN_h} A_{\z\z'} \v_h(\z) \cdot \v_h(\z').
	\end{equation*}
	From the definition of the stiffness matrix $A$, we infer that there holds
	\begin{equation*}
		\sum_{\z \in \NN_h} A_{\z\z'}
		=
		\int_\Omega \nabla 1 \cdot \nabla \varphi_{\z'} \d{x}
		=
		0
		\qquad
		\text{for all } \z' \in \NN_h.
	\end{equation*}
	This leads to
	\begin{equation*}
		\int_\Omega |\nabla \v_h|^2 \d{x}
		=
		\!\!\!\sum_{\z,\z' \in \NN_h}\!\! A_{\z\z'} \v_h(\z) \cdot \v_h(\z') - \!\!\!\sum_{\z,\z' \in \NN_h}\!\! A_{\z\z'}  \v_h(\z')^2
		=
		\!\!\!\sum_{\z,\z' \in \NN_h}\!\! A_{\z\z'} \big( \v_h(\z) - \v_h(\z') \big) \cdot \v_h(\z').
	\end{equation*}
	Therefore, symmetry of the stiffness matrix leads to
	\begin{align*}
		\int_\Omega |\nabla \v_h|^2 \d{x}
		&=
		\frac{1}{2} \!\sum_{\z,\z' \in \NN_h}\!\! A_{\z\z'} \big( \v_h(\z) - \v_h(\z') \big) \cdot \v_h(\z')
		+ \!\frac{1}{2} \!\sum_{\z,\z' \in \NN_h}\!\! A_{\z\z'} \big( \v_h(\z') - \v_h(\z) \big) \cdot \v_h(\z).\\
		&=
		- \frac{1}{2} \sum_{\z,\z' \in \NN_h} A_{\z\z'} \big| \v_h(\z) - \v_h(\z') \big|^2
		=
		- \frac{1}{2} \sum_{\substack{\z,\z' \in \NN_h \\ \z \neq \z'}} A_{\z\z'} \big| \v_h(\z) - \v_h(\z') \big|^2 
	\end{align*}
	Note that the mapping $\vec{x} \in \set{\tilde{\vec{x}} \in \R^3}{|\tilde{\vec{x}}| \geq 1} \mapsto \vec{x}/|\vec{x}|$ is Lipschitz continuous with constant $1$.
	With $A_{\z\z'} \leq 0$ for all $\z \neq \z'$ and $|\v_h(\z)| \geq 1$ for all $\z \in \NN_h$, we thus have 
	\begin{align*}
		\int_\Omega |\nabla (\Pi_h \v_h)|^2 \d{x}
		&=
		- \frac{1}{2} \sum_{\substack{\z,\z' \in \NN_h \\ \z \neq \z'}} A_{\z\z'} \left| \frac{\v_h(\z)}{|\v_h(\z)|} - \frac{\v_h(\z')}{|\v_h(\z')|} \right|^2\\
		&\leq
		- \frac{1}{2} \sum_{\substack{\z,\z' \in \NN_h \\ \z \neq \z'}} A_{\z\z'} \left| \v_h(\z) - \v_h(\z') \right|^2
		=
		\int_\Omega |\nabla \v_h|^2 \d{x}
	\end{align*}
	This shows~\eqref{eq:decrease-exchange}.

	\textbf{Step 2:} We show that
	\begin{equation}
	\label{eq:decrease-Spart}
		\int_\Omega \II_h \big( g_\gamma(\Pi_h \v_h, \Pi_h \v_h) \big) \d{x}
		\leq
		\int_\Omega \II_h \big( g_\gamma(\v_h, \v_h) \big) \d{x}.
	\end{equation}
	Since $|\v_h(\z)| \geq 1$ for all $\z \in \NN_h$ and since $g_\gamma(\cdot, \cdot)$ is bilinear, we have that
	\begin{align*}
		\int_\Omega \II_h \big( g_\gamma(\Pi_h \v_h, \Pi_h \v_h) \big) \d{x}
		&=
		\sum_{\z \in \NN_h} \frac{g_\gamma(\v_h(\z), \v_h(\z))}{|\v_{h}(\z)|^2} \int_\Omega \varphi_{\z} \d{x}\\
		&\leq
		\sum_{\z \in \NN_h} g_\gamma(\v_h(\z), \v_h(\z)) \int_\Omega \varphi_{\z} \d{x}
		=
		\int_\Omega \II_h \big( g_\gamma(\v_h, \v_h) \big) \d{x}.
	\end{align*}
	This shows~\eqref{eq:decrease-Spart}.
	Combining~\eqref{eq:decrease-exchange}--\eqref{eq:decrease-Spart}, we conclude the claim for $\kappa = 0$.
\end{proof}

\begin{remark}
    The proof of Lemma~\ref{lemma:discrete-projection-decrease} relies on two facts that are specific to the case $\kappa = 0$:
	The angle condition on the triangulation implies the non-positivity of the off-diagonal entries of the stiffness matrix;
	combined with the $1$-Lipschitz property of the nodal normalization $x \mapsto
	x / |x|$, this yields the exchange-energy estimate
	\begin{equation*}
		\int_{\Omega} | \nabla (\Pi_h \v_h) |^2
		\leq
		\int_{\Omega} | \nabla \v_h |^2 .
	\end{equation*}
	By contrast, when $\kappa \neq 0$, the helical (DMI) contribution produces
	mixed and lower-order terms that at the discrete level are not controlled by the angle condition alone;
	consequently, the total discrete energy may increase after projection, as we show in Section~\ref{sec:helical-increase}.
	Determining mesh or projection conditions that restore discrete monotonicity for $\kappa \neq 0$ remains an open question.

	Nevertheless, the observed increases in the energy level are very small and sporadic in our tests and Algorithm~\ref{alg:alouges-discrete} empirically converges also for $\kappa \neq 0$;
	see Section~\ref{sec:reduced-model} for the numerical experiments.
\end{remark}

For the proof of Theorem~\ref{th:convergence-discrete}, the next lemma collects some basic properties of the nodal projection.

\begin{lemma}
\label{lemma:nodal-projection}
	Recall the nodal projection $\II_h \colon C^0(\overline{\Omega}) \to \SS^1(\TT_h)$ from the definition of $\JJ_h$ in~\eqref{eq:energy-discrete}.
	Then, there hold the following properties {\rm (i)--(iii)}.
	\begin{enumerate}[label={\rm (\roman*)}]
		\item For all $v \in C^2(\overline{\Omega})$ and all $1 \leq p < \infty$, there holds the approximation property
		\begin{equation}
		\label{eq:nodal-approximation}
			\norm{(1-\II_h) v}_{L^p(\Omega)} + h \norm{\nabla (1-\II_h) v}_{L^p(\Omega)}
			\le C_{\rm apx}
			h^2 \norm{D^2 v}_{L^p(\Omega)}.
		\end{equation}
		
		\item For all $v_h \in \SS^q(\Omega)$ with $q \in \N$ and all $1 \leq p < \infty$, there holds the inverse inequality
		\begin{equation}
		\label{eq:inverse-inequality}
			h \norm{D^2 v_h}_{L^p(\Omega)}
			\le C_{\rm inv}
			\norm{\nabla v_h}_{L^p(\Omega)}.
		\end{equation}
		
		\item For all $v_h \in \SS^1(\Omega)$, there holds
		\begin{equation}
		\label{eq:nodal-l2}
			\norm{v_h}_{L^2(\Omega)}^2
			\leq
			\int_{\Omega} \II_h ( |v_h|^2 ) \d{x}.
		\end{equation}
	\end{enumerate}
    The constant $C_{\rm apx} > 0$ depends only on shape regularity of $\TT_h$ and the Lebesgue index $p$, while $C_{\rm inv} > 0$ depends only on shape regularity of $\TT_h$, the Lebesgue index $p$, and the polynomial degree $q$.
\end{lemma}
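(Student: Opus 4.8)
The plan is to prove all three assertions by the standard localize--scale--sum strategy for Lagrange finite elements: pass to a reference simplex $\widehat{T}$ via the affine element maps $F_T\colon\widehat{T}\to T$, establish a scale-invariant estimate there, pull it back to $T$ with the usual scaling identities, and sum over $T\in\TT_h$. For part~(i), I would fix $T\in\TT_h$, set $\widehat{v}:=v\circ F_T\in C^2(\widehat{T})$, and let $\widehat{\II}$ denote nodal interpolation on $\widehat{T}$. Representing $\widehat{v}-\widehat{\II}\widehat{v}$ and its gradient by the standard integral remainder formulas in terms of $D^2\widehat{v}$ (this is where the hypothesis $v\in C^2$ enters, and it makes the bound valid uniformly down to $p=1$; for $p>1$ it is simply the Bramble--Hilbert lemma applied to the operator $1-\widehat{\II}$, which is bounded on $W^{2,p}(\widehat{T})$ and annihilates affine functions) yields
\[
	\norm{(1-\widehat{\II})\widehat{v}}_{L^p(\widehat{T})}
	+ \norm{\nabla(1-\widehat{\II})\widehat{v}}_{L^p(\widehat{T})}
	\le C\,\norm{D^2\widehat{v}}_{L^p(\widehat{T})}
	\qquad\text{for all } 1\le p<\infty .
\]
Transforming back via $\norm{DF_T}\lesssim h_T$, $\norm{DF_T^{-1}}\lesssim h_T^{-1}$, $|\det DF_T|\sim h_T^2$ (with constants depending only on shape regularity) gives $\norm{(1-\II_h)v}_{L^p(T)}\le C h_T^2\norm{D^2 v}_{L^p(T)}$ and $\norm{\nabla(1-\II_h)v}_{L^p(T)}\le C h_T\norm{D^2 v}_{L^p(T)}$; taking $p$-th powers, summing over $T$, and using $h_T\le h$ yields~\eqref{eq:nodal-approximation}. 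For part~(ii), the reference-element input is pure finite-dimensionality: on $\widehat{T}$, the maps $\widehat{p}\mapsto\norm{D^2\widehat{p}}_{L^p(\widehat{T})}$ and $\widehat{p}\mapsto\norm{\nabla\widehat{p}}_{L^p(\widehat{T})}$ are a continuous seminorm and a norm on the quotient of the finite-dimensional space $\mathbb{P}_q(\widehat{T})$ of polynomials of degree $\le q$ by the constants, hence $\norm{D^2\widehat{p}}_{L^p(\widehat{T})}\le C_q\norm{\nabla\widehat{p}}_{L^p(\widehat{T})}$; the same scaling bookkeeping produces the elementwise inverse estimate $h_T\norm{D^2 v_h}_{L^p(T)}\le C_{\rm inv}\norm{\nabla v_h}_{L^p(T)}$, and summation over $T\in\TT_h$ gives~\eqref{eq:inverse-inequality}. (For $q=1$ the statement is vacuous since $D^2 v_h\equiv 0$ elementwise.)

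For part~(iii) I would sidestep any mass-matrix computation and use convexity. On each $T\in\TT_h$ the function $x\mapsto|v_h(x)|^2$ is a finite sum of squares of affine functions, hence convex on $T$; therefore, writing a point $x\in T$ in barycentric coordinates over the vertices $\z$ of $T$ (so that $x=\sum_{\z}\varphi_{\z}(x)\,\z$ with $\varphi_{\z}(x)\ge 0$ and $\sum_{\z}\varphi_{\z}(x)=1$), Jensen's inequality gives
\[
	|v_h(x)|^2
	\le
	\sum_{\z}\varphi_{\z}(x)\,|v_h(\z)|^2
	=
	\II_h\!\big(|v_h|^2\big)(x)
	\qquad\text{pointwise on } T .
\]
Integrating over $T$ and summing over $T\in\TT_h$ gives $\norm{v_h}_{L^2(\Omega)}^2\le\int_\Omega\II_h(|v_h|^2)\,\d{x}$, which is~\eqref{eq:nodal-l2}. (Equivalently: the lumped mass matrix dominates the consistent one in the positive-semidefinite order on $\SS^1(\TT_h)$ over simplices.)

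None of the three steps is a genuine obstacle: part~(iii) is immediate once the convexity observation is made, and parts~(i)--(ii) are textbook. The only points that need a little care are the accounting of the shape-regularity (and, for~(ii), polynomial-degree) dependence of the constants in the affine-scaling arguments, and the choice in~(i) of a reference-element estimate that stays valid uniformly down to $p=1$, for which one exploits $v\in C^2$ and invokes the classical integral remainder representation rather than a Sobolev embedding; both are standard and can be quoted from a finite-element reference such as~\cite{bbf2013} or analogous sources.
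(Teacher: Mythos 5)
Your proposal is correct and matches the paper's treatment: for part (iii) you give exactly the paper's argument (nonnegative barycentric/hat-function partition of unity plus Jensen's inequality applied elementwise, then summation over $T\in\TT_h$), while for parts (i)--(ii) the paper simply cites standard finite element literature, and your reference-element scaling and finite-dimensionality arguments are precisely the standard proofs behind those citations.
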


\begin{proof}
	For the proof of properties {\rm (i)--(ii)} we refer to standard literature, e.g.,~\cite{MR2373954}.
	For the proof of~{\rm (iii)}, we note that on each element $T \in \TT_h$, there holds the representation $v_h(x) = \sum_{\z \in \NN_h(T)} \varphi_{\z}(x) v_h(\z)$, where $\sum_{\z \in \NN_h(T)} \varphi_{\z}(x) = 1$ and $\varphi_{\z}(x) \geq 0$ for all $x \in T$.
	Thus, by Jensen's inequality, we have that
	\begin{equation*}
		\norm{v_h}_{L^2(T)}^2
		=
		\int_{T} \big| \sum_{\z \in \NN(T)} \varphi_{\z} v_h(\z) \big|^2 \d{x}
		\leq
		\int_{T} \sum_{\z \in \NN(T)} \varphi_{\z} |v_h(\z)|^2 \d{x}
		=
		\int_{T} \II_h ( |v_h|^2 ) \d{x}.
	\end{equation*}
	Summing over all elements $T \in \TT_h$, we conclude the proof.
\end{proof}

Finally, we can prove the second part of Theorem~\ref{th:convergence-discrete}, i.e., convergence of the output of Algorithm~\ref{alg:alouges-discrete} to a weak solution of the Euler--Lagrange equations~\eqref{eq:el-equations}.

\begin{proof}[{\bfseries Proof of Theorem~\ref{th:convergence-discrete}}]
	We split the proof into seven steps.

\textbf{Step 1 (Algorithm~\ref{alg:alouges-discrete} is well-defined):}
	The proof of existence of at least one solution $\w_h^{n}$ to the discrete variational formulation~\eqref{eq:discreteELbilinear} in step~\textrm{(i)} of Algorithm~\ref{alg:alouges-discrete} follows analogously to the continuous case;
    see step~1 in the proof of Theorem~\ref{th:convergence}.
	For any $n \geq 0$, the fact that $\w_{h}^{n} \in \KK_h[\u_h^{n}]$ yields that 
    \begin{align*}
        |\u_h^n(\z) - \w_h^n(\z)|^2 = |\u_h^n(\z)|^2 + |\w_h^n(\z)|^2 
        \ge 1 \quad \text{for all vertices } \z \in \NN_h.
    \end{align*}%
    Therefore, for any $\kappa \in \R$, there holds $\u_h^n - \w_h^n \in \MM_h^+$ and Step~\textrm{(iii)} in Algorithm~\ref{alg:alouges-discrete} is always well-defined.

\textbf{Step 2 (Energy decrease along computed sequence):}
From Lemma~\ref{lemma:discrete-energy-decrease} and Lemma~\ref{lemma:discrete-projection-decrease} (exploiting $\kappa = 0$), we obtain that
	\begin{equation*}
		\JJ_h \big( \u_h^{n+1} \big)
		=
		\JJ_h \big( \Pi_h(\u_h^{n} - \w_h^{n}) \big)
		\eqreff{eq:discrete-energy-decrease}{\leq}
		\JJ_h \big( \u_h^{n} - \w_h^n\big)
		\leq
		\JJ_h \big( \u_h^{n} \big)
        \quad \text{for all } n \in \N_0
	\end{equation*}
	and thus conclude $\JJ_h(u_h) \leq \JJ_h(u_h^0)$ by induction.

\textbf{Step 3 (Algorithm~\ref{alg:alouges-discrete} terminates):}
Arguing as for~\eqref{eq:w-convergence} in step~3 of the proof of Theorem~\ref{th:convergence}, we conclude that $\JJ_h(\w_h^{n}) \to 0$ as $n \to \infty$. In particular, we have that $\JJ_h(\w_h^{n}) \leq \mathrm{tol}_h$ for some $n \in \N_0$ so that Algorithm~\ref{alg:alouges-discrete} terminates. this concludes the proof of Theorem~\ref{th:convergence-discrete}(i).

\textbf{Step 4 (Existence of weakly convergent subsequence):} Having proved Theorem~\ref{th:convergence-discrete}(i),
Lemma~\ref{lemma:discrete-energy-decrease} yields that
	\begin{equation*}
		\norm{\u_h}_{\vec{H}^1(\Omega)}^2
		\lesssim
		\JJ_h(\u_h) + \norm{\u_h}_{\vec{L}^2(\Omega)}^2
		\leq
		\JJ_h(\u_h^{0}) + \norm{\u_h}_{\vec{L}^2(\Omega)}^2
		\eqreff{eq:nodal-l2}{\leq}
		C_0 + |\Omega|.
	\end{equation*}
	Therefore, there exists a subsequence of $(\u_h)_h$ (which is not relabeled) and $\u \in \vec{H}^1(\Omega)$ such that $\u_h \weak \u$ weakly in $\vec{H}^1(\Omega)$.

\textbf{Step 5 (Weak limit $\u$ satisfies modulus constraint):}
	Since $|\u_h(\z)| = 1$ for all $\z \in \NN_h$, we have that $\II_h(|\u_h|^2) = 1 \in \SS^1(\TT_h)$.
	Moreover, $|\u_h|^2 = \u_h \cdot \u_h \in [\SS^2(\TT_h)]^3$.
	With Lemma~\ref{lemma:nodal-projection}, we get on each element $T \in \TT_h$ that
	\begin{align*}
		&\norm{|\u_h|^2 - 1}_{\vec{L}^2(T)}
		=
		\norm{(1-\II_h)|\u_h|^2}_{\vec{L}^2(T)}
		\eqreff{eq:nodal-approximation}{\lesssim}
		h^2 \norm{D^2 |\u_h|^2}_{\vec{L}^2(T)}
		\eqreff{eq:inverse-inequality}{\lesssim}
		h \norm{\nabla |\u_h|^2}_{\vec{L}^2(T)}\\
		& \quad =
		2 h \norm{\u_h \cdot \nabla \u_h}_{\vec{L}^2(T)}
		\leq
		2 h \norm{\u_h}_{\vec{L}^\infty(\Omega)} \norm{\nabla \u_h}_{\vec{L}^2(T)} \\
		& \quad =
		2 h \norm{\nabla \u_h}_{\vec{L}^2(T)}
		\lesssim
		h \JJ_h(\u_h)^{1/2}
		\lesssim
		h \to 0.
	\end{align*}
	Since $\u_h \rightharpoonup \u$ in $\vec{H}^1(\Omega)$ and $\u_h \to \u$ in $\vec{L}^2(\Omega)$, there holds (up to a subsequence which is not relabeled) that $\u_h \to \u$ almost everywhere in $\Omega$.
	This implies $|\u| = 1$ a.e.\ in $\Omega$ and hence $\u \in \MM$.
	
	\textbf{Step 6 ($\u$ satisfies the Euler--Lagrange equations~(\ref{eq:el-equations}) of $\JJ$):}
	Let $n \in \N_0$ be the index such that $\u_h = \u_h^{n}$ is returned by Algorithm~\ref{alg:alouges-discrete}.
	Defining $\w_h := \w_h^{n}$, for all $\v_h \in \KK_h[\u_h]$, it follows that
	\begin{equation*}
		\int_\Omega \nabla (\u_h - \w_h) : \nabla \v_h
			+ \II_h \big( g_\gamma(\u_h - \w_h, \v_h) \big) \d{x}
		\eqreff{eq:energy-discrete}{=}
		a_h(\u_h - \w_h, \v_h)
		\eqreff{eq:discreteELbilinear}{=}
		0.
	\end{equation*}
	For $\vec{\varphi} \in \vec{C}^\infty(\Omega)^3$, define $\overline{\v} := \vec{\varphi} \times \u_h \in \vec{H}^1(\Omega) \cap \vec{C}(\Omega)$ and $\overline{\v}_h := \II_h(\overline{\v}) \in \vec{\SS}^1(\TT_h)$.
	Then, the last equation gives
	\begin{align}
	\nonumber
		\int_\Omega \Big[ \nabla \u_h &: \nabla \overline{\v}
			+ \II_h \big( g_\gamma(\u_h, \overline{\v}) \big) \Big] \d{x}\\
	\label{eq:convergence-discrete-start}
		&=
		\int_\Omega \Big[ \nabla (\u_h - \w_h) : \nabla (\overline{\v} - \overline{\v}_h)
			+ \II_h \big( g_\gamma(\u_h - \w_h, \overline{\v} - \overline{\v}_h) \big) \Big] \d{x}\\
	\nonumber
		& \qquad + \int_\Omega \Big[ \nabla \w_h : \nabla \overline{\v}
			+ \II_h \big( g_\gamma(\w_h, \overline{\v}) \big) \Big] \d{x}.
	\end{align}
	It remains to show that, as $h \to 0$, the left-hand side and right-hand side of~\eqref{eq:convergence-discrete-start} converge to the left-hand side and right-hand side of~\eqref{eq:el-equations}, respectively.
	Then, Lemma~\ref{lemma:el-equations} yields that $\u$ is a weak solution of the Euler--Lagrange equations~\eqref{eq:el-strong}.
	
	\textbf{Step 6.1 (Left-hand side of~(\ref{eq:convergence-discrete-start}) converges to left-hand side of~(\ref{eq:el-equations}) as $\boldsymbol{h \to 0}$):}	
	We notice that
	\begin{equation*}
		\int_\Omega \Big[ \nabla \u_h : \nabla \overline{\v} + \II_h \big( g_\gamma(\u_h, \overline{\v}) \big) \Big] \d{x}
		=
		\int_\Omega \Big[ (\u_h \times \nabla \u_h) : \nabla \vec{\varphi} + \II_h \big( g_\gamma(\u_h, \vec{\varphi} \times \u_h) \big) \Big] \d{x}.
	\end{equation*}
	Recall that $\nabla \u : \nabla (\vec{\varphi} \times \u) = \u \times \nabla \u : \nabla \vec{\varphi}$.
	Since $\u_h \weak \u$ in $\vec{H}^1(\Omega)$ implies $\u_h \to \u$ in $\vec{L}^2(\Omega)$ and $\nabla \u_h \weak \nabla \u$ in $\vec{L}^2(\Omega)$, there holds for the first term that
	\begin{equation*}
		\int_\Omega \nabla \u_h : \nabla \overline{\v} \d{x}
		=
		\int_\Omega \nabla \u_h : \nabla \vec{\varphi \times \u_h} \d{x}
		=
		\int_\Omega (\u_h \times \nabla \u_h) : \nabla \vec{\varphi} \d{x}
		\to
		\int_\Omega (\u \times \nabla \u) : \nabla \vec{\varphi} \d{x}.
	\end{equation*}
	For the mass lumping term, we first notice that
	\begin{equation}
	\label{eq:convergence-discrete-mass}
		\int_{\Omega} \II_h(v^h) \d{x}
		=
		\int_\Omega v \d{x} + \int_{\Omega} v^h - v \d{x} + \int_{\Omega} (\II_h - 1) v^h \d{x}.	
	\end{equation}
	Since $\norm{\u}_{\vec{L}^\infty(\Omega)} = \norm{\u_h}_{\vec{L}^\infty(\Omega)} = 1$ , the choices $v^h := g_\gamma(\u_h, \vec{\varphi} \times \u_h)$ and $v := g_\gamma(\u, \vec{\varphi} \times \u)$ yield that
	\begin{equation*}
		\int_{\Omega} |v^h - v| \d{x}
		\lesssim
		(\norm{\u_h}_{\vec{L}^\infty(\Omega)} + \norm{\u}_{\vec{L}^\infty(\Omega)}) \norm{\u_h-\u}_{\vec{L}^2(\Omega)} \norm{\vec{\varphi}}_{\vec{L}^2(\Omega)}
		\lesssim
		\norm{\u_h-\u}_{\vec{L}^2(\Omega)}
		\to 0.
	\end{equation*}
	The third term can be treated by the local approximation property of the nodal interpolation on each element, which yields that
	\begin{equation*}
		\int_T \big| (\II_h - 1) v^h \big| \d{x}
		\leq
		|T|^{1/2} \norm{(\II_h - 1) v^h}_{L^2(T)}
		\lesssim
		|T|^{1/2} h^2 \norm{D^2 v^h}_{L^2(T)}.
	\end{equation*}
	Looking at $v^h = g_\gamma(\u_h, \vec{\varphi} \times \u_h)$ in detail, we see that, regardless of the sign of $\gamma$, this expression is a sum of products of the form $u_{h,i}u_{h,j}\varphi_k$ for some $i,j,k \in \{1,2,3\}$.
	For the Hessian of such a product, we obtain with $\u_h \in \SS^1(\TT_h)$, $\norm{\u_h}_{\vec{L}^\infty(\Omega)} = 1$, and $\norm{\nabla \u_h}_{\vec{L}^2(\Omega)} \lesssim 1$ that
	\begin{align}
	\nonumber
		&\norm{D^2 (u_{h,i}u_{h,j}\varphi_k)}_{\vec{L}^2(T)}\\
		& \qquad =
	\label{eq:convergence-discrete-explicit}
		\norm{u_{h,i} u_{h,j} D^2 \varphi_k +
			\varphi_k \nabla u_{h,i} \nabla u_{h,j}^\transposed +
			u_{h,j} \nabla u_{h,i} \nabla \varphi_k^\transposed +
			u_{h,i} \nabla u_{h,j} \nabla \varphi_k^\transposed}_{\vec{L}^2(T)}\\
	\nonumber
		& \qquad \lesssim
		\big(
			\norm{\u_h}_{\vec{L}^2(\Omega)}^2
			+ \norm{\nabla \u_h}_{\vec{L}^2(\Omega)}^2
			+ \norm{\u_h}_{\vec{L}^2(\Omega)} \norm{\nabla \u_h}_{\vec{L}^2(\Omega)}
		\big) \norm{\vec{\varphi}}_{\vec{W}^{2,\infty}(T)}
		\lesssim
		1.
	\end{align}
	Combining the estimates~\eqref{eq:convergence-discrete-mass}--\eqref{eq:convergence-discrete-explicit} and summing over all elements, we conclude that
	\begin{equation}
	\label{eq:convergence-discrete-mass-lumping}
		\int_\Omega \II_h \big( g_\gamma(\u_h, \vec{\varphi} \times \u_h) \big) \d{x}
		\quad\stackrel{h \to 0}{\longrightarrow}\quad
		\int_\Omega g_\gamma(\u, \vec{\varphi} \times \u) \d{x}.
	\end{equation}
	
	\textbf{Step 6.2 (Right-hand side of~(\ref{eq:convergence-discrete-start}) vanishes as $\boldsymbol{h \to 0}$):}	
	We first observe that there holds $\overline{\v}(\z) - \overline{\v}_h(\z) = \vec{0}$ for all $z \in \NN_h$ and, thus, the term $\II_h((u_{h,3} - w_{h,3}) (\overline{v}_{3} - \overline{v}_{h,3}))$ vanishes.
	Using the elementwise interpolation property of the nodal interpolation, we further see that
	\begin{equation*}
		\norm{\nabla(\overline{\v} - \overline{\v}_h)}_{\vec{L}^2(T)}
		=
		\norm{\nabla(I-\II_h)(\vec{\varphi} \times \u_h)}_{\vec{L}^2(T)}
		\lesssim
		h \norm{D^2 (\vec{\varphi} \times \u_h)}_{\vec{L}^2(T)}.
	\end{equation*}
	An explicit computation in the spirit of~\eqref{eq:convergence-discrete-explicit} shows that $\norm{D^2 (\vec{\varphi} \times \u_h)}_{\vec{L}^2(T)} \lesssim \norm{\vec{\varphi}}_{\vec{W}^{2,\infty}(T)}$.
	Thus, summing over all elements, we get that
	\begin{equation*}
		\norm{\nabla(\overline{\v} - \overline{\v}_h)}_{\vec{L}^2(\Omega)}
		\lesssim
		h \norm{\vec{\varphi}}_{\vec{W}^{2,\infty}(\Omega)} \to 0
		\quad
		\text{as } h \to 0.
	\end{equation*}
	Recall that
	\begin{equation*}
	\norm{\nabla(\u_h - \w_h)}_{\vec{L}^2(\Omega)}
	\leq
	\norm{\nabla\u_h}_{\vec{L}^2(\Omega)} + \norm{\w_h}_{\vec{L}^2(\Omega)}
	\lesssim
	\JJ_h(\u_h)^{1/2} + \JJ_h(\w_h)^{1/2}
	\leq
	C_0^{1/2} + \mathrm{tol}_h^{1/2}
	\end{equation*}
	is uniformly bounded.
	Therefore, we have that 
    $$
    \int_\Omega \big[ \nabla (\u_h - \w_h) : \nabla (\overline{\v} - \overline{\v}_h) + \II_h(g_\gamma(\u_h - \w_h, \overline{\v} - \overline{\v}_h)) \big] \d{x} \to 0
    \quad \text{as $h \to 0$.}
    $$
	With $\norm{\nabla \w_h}_{\vec{L}^2(\Omega)}^2 \leq \mathrm{tol}_h \to 0$ and the boundedness of $\overline{\v}$ in $\vec{H}^1(\Omega) \cap \vec{C}(\Omega)$, the Hölder inequality proves
	\begin{equation*}
		\int_\Omega \big[ \nabla \w_h : \nabla \overline{\v}
		+ \II_h\big( g_\gamma(\w_h, \overline{\v}) \big) \big] \d{x}
		\lesssim
		\JJ_h(\w_h)
		\lesssim
		\mathrm{tol}_h
		\to 0.
	\end{equation*}
	
	\textbf{Step~7 (Proof of energy estimate~(\ref{eq:convergence-discrete})):}
Analogously to~\eqref{eq:convergence-discrete-mass-lumping}, it follows that
	\begin{equation*}
		\int_\Omega \II_h \big( g_\gamma(\u_h, \u_h) \big) \d{x}
		\quad\stackrel{h \to 0}{\longrightarrow}\quad
		\int_\Omega g_\gamma(\u, \u) \d{x}.
	\end{equation*}
	Therefore, it follows that
	\begin{align*}
		\liminf_{h \to 0} \JJ_h(\u_h)
		&=
		\liminf_{h \to 0} \int_\Omega \nabla \u_h : \nabla \u_h \d{x}
		+ \lim_{h \to 0} \int_\Omega \II_h \big( g_\gamma(\u_h, \u_h) \big) \d{x}\\
		&\geq
		\int_\Omega \big[ \nabla \u : \nabla \u + g_\gamma(\u, \u) \big] \d{x}
		=
		\JJ(\u).
	\end{align*}
	This concludes the proof.
\end{proof}

\begin{remark}
We note that $\kappa = 0$ is only exploited in the crucial step~2 of the preceding proof of Theorem~\ref{th:convergence-discrete}, where Lemma~\ref{lemma:discrete-projection-decrease} is used. All other steps are valid for arbitrary $\kappa \in \R$.
\end{remark}%

\subsection{Necessity of nodal projection}\label{sec:anisotropy-increase}

Note that using mass lumping for the discrete energy is necessary in order to prove Lemma~\ref{lemma:discrete-projection-decrease}.
Without the nodal projection, energy decrease for the $g_\gamma$-term (see~\eqref{eq:decrease-Spart}) does not hold true in general.
In particular, for $\gamma \geq 0$, consider the unit triangle $T := \conv \{ (0,0), (1,0), (0,1) \}$, where $\conv$ denotes the convex hull.
An explicit computation with $\varepsilon > 0$ and $\delta := \sqrt{2-\varepsilon^2/2}$ shows that the discrete function 
\begin{equation*}
	\v_h := \colvec{\delta}{\delta}{-\varepsilon} (1-x-y) + \colvec{0}{0}{1} x + \colvec{0}{0}{1} y ~\in \SS^1(T)
\end{equation*}
satisfies $|\v_h(0,0)| = 2$ and $\norm{\Pi_h \v_h \cdot \e_3}_{L^2(T)}^2 > \norm{\v_h \cdot \e_3}_{L^2(T)}^2$ for $\varepsilon < 4/3$, since
\begin{align*}
	\norm{\v_h \cdot \e_3}_{L^2(T)}^2
	&=
	\norm{(1+\varepsilon)x + (1+\varepsilon)y - \varepsilon}_{L^2(T)}^2
	=
	\frac{1}{48}\big( 12 - 8\varepsilon + 4\varepsilon^2 \big),\\
	\norm{\Pi_h \v_h \cdot \e_3}_{L^2(T)}^2
	&=
	\norm{(1+\varepsilon/2)x + (1+\varepsilon/2)y - \varepsilon/2}_{L^2(T)}^2
	=
	\frac{1}{48}\big( 12 - 4\varepsilon + \varepsilon^2 \big).
\end{align*}
For $\gamma < 0$, a similar counterexample can be constructed.

\subsection{Increase in helical energy for $\vec{\kappa} \neq \vec{0}$}\label{sec:helical-increase}

Since, analytically, it is not straightforward to either prove or to disprove energy decrease in every step of Algorithm~\ref{alg:alouges-discrete} in the case $\kappa \neq 0$ (see~\eqref{eq:discrete-energy-decrease}), we seek to decide on that matter numerically.
To this end, we look at the mesh $\TT_h = \{T\}$ consisting only of the unit triangle $T = \conv \{(0,0), (1,0), (0,1) \}$.
We stress that $\TT_h$ satisfies the angle condition~\eqref{eq:angle-condition}.

On this mesh, random vectors $\constvec{a}, \constvec{b}, \constvec{c} \in \R^3$ with $|\constvec{a}|, |\constvec{b}|, |\constvec{c}| > 1$ are generated to define
\begin{equation*}
	\v_h(x,y)
	:=
	(1-x-y)\,\constvec{a} + x\,\constvec{b} + y\,\constvec{c}
	\in \SS^1(\TT_h).
\end{equation*}
Subsequently, the energy difference $\JJ_h(\v_h) - \JJ_h(\Pi_h\v_h)$ is computed with Mathematica~\cite{mathematica}.
We found that, e.g., the values
\begin{equation*}
	\constvec{a}
	:=
	\left( \begin{array}{c}
		0.44353334 \\ 0.86741656 \\ 0.22558999
	\end{array} \right),
	\quad
	\constvec{b}
	:=
	\left( \begin{array}{c}
		0.46138525 \\ 0.63580881 \\ 0.61893662
	\end{array} \right),
	\quad
	\constvec{c}
	:=
	\left( \begin{array}{c}
		0.5304891 \\ 0.66534908 \\ -0.52526736
	\end{array} \right),
\end{equation*}
with $|\constvec{a}|, |\constvec{b}|, |\constvec{c}| > 1 + \num{5e-6}$ lead to an energy increase $\JJ_h(\Pi_h\v_h) \approx \JJ_h(\v_h) + \num{9.27e-6}$.

We stress that similar results can be found for different shapes and scalings of the triangle $T$, although these examples are scarce and the energy increase is small.
This suggests that a generalization of the energy decrease~\eqref{eq:discrete-energy-decrease} to $\kappa \neq 0$ cannot be true in general. However, since Algorithm~\ref{alg:alouges-discrete} appears to converge also for $\kappa \neq 0$, we expect that this is a shortcoming of our analysis which potentially can be overcome.

\subsection{Numerical experiment on reduced 3D model}\label{sec:reduced-model}

To test our algorithm in the case of $\kappa \neq 0$ (which is thus not covered by Theorem~\ref{th:convergence-discrete}), we reproduce an experiment from~\cite[Section~4.2]{hpp+19}.
This work considers the three dimensional micromagnetic model
\begin{equation}
\label{eq:3d-model}
	\EE_{\mathrm{3D}}(\u)
	=
	\int_\Omega A \Big[ |\nabla \u|^2 + D \, \vec{\pi}[\u] \cdot \u + K |\u \times \e_3|^2 + \frac{\mu_0 M_s^2}{2} |\vec{h}_\mathrm{d}[\u]|^2 \Big] \d{x}
\end{equation}
with the interfacial Dzyaloshinskii--Moriya interaction (DMI)
\begin{equation}
\label{eq:dmi}
	\vec{\pi}[\u]
	=
	\e_1 \times \partial_2 \u - \e_2 \times \partial_1 \u
	=
	 \left(\begin{array}{c}
		-\partial_1 u_3\\
		-\partial_2 u_3\\
		\partial_1 u_1 + \partial_2 u_2
	\end{array}\right).
\end{equation}
While~\cite{hpp+19} employs the so-called Landau--Lifshitz--Gilbert equation, which models the time-dependent evolution of the magnetization (as a generalized gradient flow for $\EE_{\mathrm{3D}}$), we compute the steady state of the system by energy minimization.
Furthermore, we consider the anti-symmetric exchange term $\curl(\u) \cdot \u$ instead of interfacial DMI $\vec{\pi}[\u] \cdot \u$; see Remark~\ref{rem:domain-walls} below.
The material parameters are those of cobalt, i.e., $A = \SI{1.5e-11}{\joule\per\meter}$, $K = \SI{8e5}{\joule\per\meter\tothe{3}}$, and $M_s = \SI{5.8e5}{\ampere\per\meter}$.
The strength of the anti-symmetric exchange term is varied in the range $D = 0, 1, \ldots, \SI{8e-3}{\joule\per\meter\tothe{2}}$.
The computational domain $\Omega$ is a disk of diameter $\SI{80}{\nano\meter}$ within the plane spanned by $\e_1$ and $\e_2$.

To obtain a two-dimensional model that fits in the framework of the present paper, we first non-dimensionalize the model in the sense of, e.g.,~\cite{diss-michele}:
With the exchange length $\lex = \sqrt{2A/(\mu_0 M_s^2)}$ and the vacuum permeability $\mu_0 = 4\pi \cdot \SI{E-7}{\henry\per\meter}$, we rescale $x \to x/\lex$.
Together with $\kappa = D/(\mu_0 M_s^2 \lex)$, passing to the thin-film limit as in~\cite{ddfpr2022} yields the two-dimensional energy
\begin{equation}
\label{eq:2d-model}
	\EE_{\mathrm{2D}}(\u)
	=
	\int_\Omega \Bigl[ \frac{1}{2} |\nabla \u|^2 + \kappa \curl\u \cdot \u + \frac{K}{\mu_0 M_s^2} |\u \times \e_3|^2 + \frac{1}{2} |\u \cdot \e_3|^2 \Bigr] \d{x}.
\end{equation}
Using~\eqref{eq:cross-dot-identity} to summarize the anisotropy terms and rewriting the first two terms as helical derivative, this energy reads as~\eqref{eq:energy} with $\kappa = D/(\mu_0 M_s^2 \lex)$ and $\gamma = 1 - 2K/(\mu_0 M_s^2) - \kappa^2$.

We perform two different experiments that differ only by the initial values
\begin{align}
\label{eq:ic1}
	\u^0 &= \e_3,\\
\label{eq:ic2}
	\u^0 &= \begin{cases}
	- \e_3 & r < \SI{15}{\nano\meter} - \varepsilon,\\
	\e_3 & r > \SI{15}{\nano\meter} + \varepsilon,
	\end{cases}
\end{align}
with $\varepsilon > 0$ small and a smooth transition layer such that $\u^0 \in \MM$ in both cases, which represent a constant and a skyrmion-like initial magnetization, respectively.
The results for~\eqref{eq:ic1} are shown in Figure~\ref{fig:constant-ic} and those for~\eqref{eq:ic2} in Figure~\ref{fig:jump-ic}.
For the constant initial magnetization~\eqref{eq:ic1}, the magnetization stays essentially uniform up to $D=6$; for $D=7,8$, horseshoe-like magnetic domains are visible.
For the skyrmion-like initial magnetization~\eqref{eq:ic2}, the magnetization relaxes to a uniform magnetization for $D=0,1,2$, but forms a skyrmion for $D=3, \ldots, 6$ and a so-called target skyrmion for $D=8$.
This is in total agreement with~\cite{hpp+19}.
However, for the skyrmion-like initial magnetization and $D=7$ we get a horseshoe-like state, whereas~\cite{hpp+19} gets a target skyrmion.
This may be explained by the heavy dependence of the bifurcation point between simple skyrmion and target skyrmion on the relaxation method as suggested by numerical evidence in~\cite{dfppr2022}, as well as the effects explained in Remark~\ref{rem:domain-walls} below.
Since our algorithm is designed around physical energy minimization without the need for user-supplied parameters, we cannot control the relaxation process in our algorithm.

\begin{remark}
\label{rem:domain-walls}
	We note that considering the two-dimensional curl
	\begin{equation*}
	\curl(\u)
	=
	\e_1 \times \partial_1 \u + \e_2 \times \partial_2 \u
	=
	\left(\begin{array}{c}
		\partial_2 u_3\\
		-\partial_1 u_3\\
		\partial_1 u_2 - \partial_2 u_1
	\end{array}\right)
	\end{equation*}
	instead of the DMI term $\vec{\pi}[\u]$ of the three dimensional model does not have any effects on the topological structure of the magnetization.
	However, it changes the type of the domain walls:
	In~\cite{hpp+19}, only Neel-type domain walls occur, whereas our model produces only Bloch-type walls.
\end{remark}

\begin{remark}
	Numerically, the initial configuration~\eqref{eq:ic2} is meta-stable for $D=0$ in the thin-film case.
	In the thin-film 2D model of~\cite{ddfpr2022}, the stray-field contribution is approximated by $|\u \cdot \e_3|^2$, which causes the stray field of $\u^0_h$ to be parallel to $\e_3$.
	Together with $\Delta \u^0_h$ and anisotropy also being parallel to $\e_3$, and $\e_3 \perp \KK_h[\u^0_h]$, the initial tangential update $\w_h$ vanishes.
	In 3D, however, the stray field would not be pointing purely in the $\e_3$ direction because of boundary effects, thus exerting a small force perpendicular to $\e_3$ and perturbing that initial state.
	For this reason, we introduced a small divergent vector field for the disc part $r < \SI{15}{\nano\meter}$ to artificially perturb the initial state:
	$\u^0(x,y) = \Pi_h \big(\varepsilon \cdot (x,y,0)^\transposed - \e_3 \big)$ for some small $\varepsilon > 0$.
	For $D=0$, this proved sufficient to prevent the system from being trapped in the initial meta-stable state.
	For $D \neq 0$, this modification is not necessary.
\end{remark}

\begin{figure}
	\begin{tikzpicture}
		\foreach \D in {0,1,...,8}{
			\node (A\D) at ({1.6*\D},0) {\includegraphics[height=4em]{plots/constant_D\D}};
			\node[below=of A\D.center] (B\D) {$D = \D$};
		}
		\node[right= of A8.center] (CB) {{\includegraphics[height=4em]{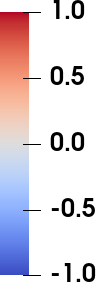}}};
	\end{tikzpicture}
	\caption{Simulation results for the model from Section~\ref{sec:reduced-model} with different values of $D$ and initial condition~\eqref{eq:ic1}.
		The color represents the contribution of the magnetization in the $\e_3$-direction.}
	\label{fig:constant-ic}
\end{figure}

\begin{figure}
	\begin{tikzpicture}
		\foreach \D in {0,1,...,8}{
			\node (A\D) at ({1.6*\D},0) {\includegraphics[height=4em]{plots/jump_D\D}};
			\node[below=of A\D.center] (B\D) {$D = \D$};
		}
		\node[right= of A8.center] (CB) {{\includegraphics[height=4em]{plots/colorbar}}};
	\end{tikzpicture}
	\caption{Simulation results for the model from Section~\ref{sec:reduced-model} with different values of $D$ and initial condition~\eqref{eq:ic2}.
	The color represents the contribution of the magnetization in the $\e_3$-direction.}
	\label{fig:jump-ic}
\end{figure}


\printbibliography

\end{document}